\documentclass[a4paper]{article}
\usepackage{hyperref}
\usepackage{graphicx}
\usepackage{mathrsfs}
\usepackage{amsmath}
\usepackage{enumerate} 
\usepackage{amsxtra,amssymb,latexsym, amscd,amsthm}
\usepackage{indentfirst}
\usepackage{color}
\usepackage[utf8]{inputenc}
\usepackage[mathscr]{eucal}
\usepackage{amsfonts}
\usepackage{graphics}
\usepackage{multirow}
\usepackage{array}
\usepackage{subfigure}
\usepackage{cite}
\usepackage{wrapfig}

\newcommand{\footremember}[2]{%
    \footnote{#2}
    \newcounter{#1}
    \setcounter{#1}{\value{footnote}}%
}
\newcommand{\footrecall}[1]{%
    \footnotemark[\value{#1}]%
} 


\def\R{{\mathbb R}}
\def\Q{{\mathbb Q}}
\def\N{{\mathbb N}}
\def\S{{\mathbb S}}

\DeclareMathOperator{\Sper}{Sper}
\DeclareMathOperator{\Hom}{Hom}

\newtheorem{lemma}{\bf Lemma}[section]

\newtheorem{assumption}{\bf Assumption}[section]
\newtheorem{example}{\bf Example}[section]
\newtheorem{theorem}{\bf Theorem}[section]

\newtheorem{corollary}{\bf Corollary}[section]

\newtheorem{remark}{\bf Remark}[section]
\providecommand{\keywords}[1]
{
  \small	
  \textbf{\textbf{Keywords:}} #1
}
\begin{document}
\small
\definecolor{qqzzff}{rgb}{0,0.6,1}
\definecolor{ududff}{rgb}{0.30196078431372547,0.30196078431372547,1}
\definecolor{xdxdff}{rgb}{0.49019607843137253,0.49019607843137253,1}
\definecolor{ffzzqq}{rgb}{1,0.6,0}
\definecolor{qqzzqq}{rgb}{0,0.6,0}
\definecolor{ffqqqq}{rgb}{1,0,0}
\definecolor{uuuuuu}{rgb}{0.26666666666666666,0.26666666666666666,0.26666666666666666}
\newcommand{\vi}[1]{\textcolor{blue}{#1}}
\newif\ifcomment
\commentfalse
\commenttrue
\newcommand{\comment}[3]{%
\ifcomment%
	{\color{#1}\bfseries\sffamily#3%
	}%
	\marginpar{\textcolor{#1}{\hspace{3em}\bfseries\sffamily #2}}%
	\else%
	\fi%
}
\newcommand{\victor}[1]{
	\comment{blue}{V}{#1}
}
\title{A sparse version of Reznick's Positivstellensatz}
\author{%
Ngoc Hoang Anh Mai\footremember{1}{CNRS; LAAS; 7 avenue du Colonel Roche, F-31400 Toulouse; France.}, %
  Victor Magron\footrecall{1}  , %
   Jean-Bernard Lasserre\footrecall{1} \footremember{2}{Universit\'e de Toulouse; LAAS; F-31400 Toulouse, France.} %
  }
\maketitle
\begin{abstract}
If $f$ is a positive definite form, Reznick's Positivstellensatz [Mathematische Zeitschrift. 220 (1995), pp. 75--97] states that  there exists $k\in\N$ such that ${\| x \|^{2k}_2}f$ is a sum of squares of polynomials. 
Assuming that $f$ can be written as a sum of forms $\sum_{l=1}^p f_l$, where each $f_l$ depends on a subset of the initial variables, and assuming that these subsets satisfy the so-called {\em running intersection property}, we provide a sparse version of Reznick's Positivstellensatz.
Namely, there exists $k \in \N$ such that
$f=\sum_{l = 1}^p {{\sigma_l}/{H_l^{k}}}$, where $\sigma_l$ is a sum of squares of polynomials, $H_l$ is a uniform polynomial denominator, and both polynomials $\sigma_l,H_l$ involve the same variables as $f_l$, for each $l=1,\dots,p$.
In other words, the sparsity pattern of $f$ is also reflected in this sparse version of  Reznick's certificate of positivity. We next use this result to also obtain positivity certificates for (i) polynomials nonnegative on the whole space  and (ii) polynomials nonnegative on a (possibly non-compact) basic semialgebraic set, assuming that the input data satisfy the running intersection property. Both are sparse versions of a positivity certificate due to Putinar and Vasilescu.
\end{abstract}
\keywords{Reznick's Positivstellensatz, sparsity pattern, positive definite forms, running intersection property, sums of squares, Putinar-Vasilescu's Positivstellensatz, uniform denominators, basic semialgebraic set}
\tableofcontents
\section{Introduction and overview}

Before the 1990s, representations of positive polynomials, also known as {\em Positivstellens\"atze}, have been discovered within a purely theoretical branch of real algebraic geometry. 
More recently, such Positivstellens\"atze have become a powerful tool in polynomial optimization and control. 
%
\paragraph{Positivstellens\"atze and polynomial optimization.} 
%
With $x = (x_1,\dots,x_n)$, let $\R[x]$ stands for the ring of real polynomials and let $\Sigma[x]\subset\R[x]$ be the subset of {\em sums of squares} (SOS) of polynomials.
Let us note $\R[x]_d$ and $\Sigma[x]_d$ the respective restrictions of these two sets to polynomials of degree at most $d$ and $2d$. 

SOS decompositions of nonnegative polynomials have a distinguishing feature with important practical implications: 
Indeed they are \emph{tractable} and can be determined by solving a {\em semidefinite program}\footnote{Semidefinite programming (SDP) is an important class of convex conic optimization problems that can be solved efficiently, up to arbitrary precision, fixed in advance; the interested read is referred to e.g. \cite[Chapter 4]{ben2001lectures}.}. 
Namely, writing a polynomial $f \in \R[x]_{2d}$  as an SOS boils down \cite{parrilo2000structured} to computing the entries of a symmetric (Gram) matrix $G$ with only nonnegative eigenvalues (denoted by ``$G\succeq 0$'') such that $f=v_d^TGv_d$, with $v_d$ being the vector of all monomials of degree at most $d$.

Given $f, g_1,\dots,g_m \in \R[x]$, and the basic semialgebraic set $S(g):=\{ x\in\R^n:\, g_j(x)\ge 0\,,\,j=1,\dots,m\}$, with $g:=\{g_1,\dots,g_m\}$,
\emph{polynomial optimization} is concerned with computing $f^\star:=\inf\{f(x):\,x\in S(g)\}$.
A basic idea is to rather consider $f^\star=\sup\{\lambda \in \R:\,f-\lambda > 0\text{ on } S(g)\}$ 
and replace the difficult constraint ``$f-\lambda > 0$ on $S(g)$" with a more tractable
SOS-based decomposition of $f-\lambda$, thanks to various certificates of positivity on $S(g)$.
For instance, if $S(g)$ is compact and satisfies the so-called {Archimedean assumption\footnotemark}, Putinar's Positivstellensatz \cite{putinar1993positive} provides the decomposition $f-\lambda=\sigma_0+\sum_{j=1}^m\sigma_jg_j$, with $\sigma_j \in \Sigma[x]$.
\addtocounter{footnote}{-1}
Then one obtains the monotone non-decreasing sequence $(\rho_k)_{k\in\N}$ of lower bounds on $f^\star$ defined by:
\begin{equation}
    \label{put}
\rho_k:=\displaystyle\sup_{\lambda,\sigma_j}\,\{\,\lambda:\: f-\lambda=\sigma_0+\sum_{j=1}^m\sigma_jg_j,\ \sigma_j\in \Sigma[x]\,,\,\deg(\sigma_jg_j)\le 2 k\}.\end{equation}
For each fixed $k$, \eqref{put} is a semidefinite program and therefore can be solved efficiently. 
Moreover, by invoking Putinar's Positivstellensatz, one obtains the convergence $\rho_k\uparrow f^\star$
as $k$ increasees.
In Table \ref{tab:top.Positivstellensatz} are listed several useful Positivstellens\"atze
that guarantee convergence of similar sequences $(\rho_k)_{k\in\N}$ to $f^\star$ (where now in \eqref{put} 
one uses the appropriate positivity certificate).
\begin{table}
    \caption{\small Several Positivstellens\"atze applicable in practice.}
    \label{tab:top.Positivstellensatz}
\footnotesize
\begin{center}
\begin{tabular}{|m{1.3cm}|m{7.5cm}|m{1.7cm}|}
\hline 
Author(s) & Statement & Application(s)\\ 
\hline 
Schm\"udgen \cite{schmudgen1991thek}&
If $f$ is positive on $S(g)$ and $S(g)$ is compact, then $f=\sum_{\alpha\in \{0,1\}^m}\sigma_\alpha \prod_{j=1}^m g_j^{\alpha_j}$ for some $\sigma_\alpha\in\Sigma[x]$.
 & \cite{helton2012semidefinite}\\
 \hline
Putinar \cite{putinar1993positive}&
If a polynomial $f$ is positive on $S(g)$ satisfying Archimedian assumption\footnotemark, then $f=\sigma_0+\sum_{j=1}^m\sigma_jg_j$ for some $\sigma_j\in\Sigma[x]$.
 & \cite{lasserre2001global}\\
 \hline
 Reznick \cite{reznick1995uniform}&
 If $f$ is a positive definite form, then $\|x\|^{2k}_2f\in\Sigma[x]$ for some $k\in\N$.&
 \cite{ahmadi2017construction}\\
 \hline
 Polya \cite{polya1974positive}&
 If $f$ is a homogeneous form and $f>0$ on $\R_+^n\backslash \{0\}$, then $(\sum_j x_j)^kf$ has nonnegative coefficients for some $k\in\N$.&
 \cite{de2002approximation}\\
  \hline
   Krivine-Stengle \cite{krivine1964anneaux, stengle1974nullstellensatz}&
If a polynomial $f$ is positive on $S(g)$, $S(g)$ is compact and $g_j\le 1$ on $S(g)$, then $f=\sum_{\alpha,\beta\in \N^m}c_{\alpha,\beta} \prod_{j=1}^m (g_j^{\alpha_j}(1-g_j)^{\beta_j})$ for some $c_{\alpha,\beta}\ge 0$.&
 \cite{lasserre2017bounded}\\
 \hline
 Putinar-Vasilescu \cite{putinar1999solving}&
 If a polynomial $f$ is nonnegative on $S(g)$, then for every $\varepsilon>0$, there exists $k\in\N$ such that $\theta^k(f+\varepsilon\theta^d)=\sigma_0+\sum_{j=1}^m\sigma_j g_j$ for some  $\sigma_j\in\Sigma[x]$, where $d:=1+\lfloor \deg(f)/2\rfloor$ and $\theta:=\|x\|^2_2+1$.&
 \cite{mai2019sums}\\
 \hline
\end{tabular}    
\end{center}
\end{table}
\footnotetext{There are $ \sigma_j\in\Sigma[x]$ such that $S(\{\sigma_0+\sum_{j=1}^m\sigma_jg_j\})$ is compact.}
However their associated  so-called {\em dense hierarchies} of linear/SDP programs  are only suitable for modest size POPs (e.g., $n\le 10$ and $\deg(f),\deg(g_j)\le 10$). 
Indeed, for instance, even though \eqref{put} is a semidefinite program, it involves $\binom{n+2k}{n}$ variables and semidefinite matrices of size up to 
$\binom{n+k}{n}$, a clear limitation  for state-of-the-art semidefinite solvers. 

\begin{center}
    \emph{Therefore a scientific challenge with important computational implications is to develop alternative positivity certificates that scale well in terms of computational complexity, at least in some identified class of problems.}
\end{center}

Fortunately as we next see, we can provide such alternative positivity certificates for the class of problems 
where some structured sparsity pattern is present in the problem description
(as often the case in large-scale problems). Indeed this sparsity pattern can be exploited to yield a positivity certificate in which the sparsity pattern is reflected, thus with potential significant computational savings.

\paragraph{Exploiting sparsity pattern.}
For $n,m\in \N^{>0}$, let $I:=\{1,\dots,n\}$ and $J:=\{1,\dots,m\}$. For $T\subset I$, denote by $\R[x(T)]$ (resp. $\Sigma[x(T)]$) the ring of polynomials (resp. the subset of SOS polynomials) in the variables $x(T):=\{x_i:i\in T\}$. 
Also denote by $\R[x(T)]_t$ (resp. $\Sigma[x(T)]_t$) the restriction of $\R[x(T)]$ (resp. $\Sigma[x(T)]$) to polynomials of degree at most $t$ (resp. $2t$). For $R\subset J$, we note $g_R:=\{g_j\,:\,j\in R\}$.

\emph{Designing alternative hierarchies for solving $f^\star:=\inf\{f(x):\,x\in S(g)\}$, significantly (computationally) cheaper than their dense
version \eqref{put}, while maintaining convergence to the optimal value $f^\star$ is a real challenge with important implications.}

One first such successful contribution is due to Waki et al. \cite{waki2006sums} when the input polynomial data $f,g_j$ are sparse, where by sparse we mean the following:
\begin{assumption}\label{assum:sparse.conditions}
The following conditions hold:
\begin{enumerate}
\item[(i)] Running intersection property (RIP): ${I} = \bigcup_{l = 1}^p {{I_l}} $ with $p\in \N^{\ge 2}$, $I_l\ne \emptyset$, $l=1,\dots,p$, and for every $l\in\{ 2,\dots,p\}$, there exists $s_l \in \{ 1,\dots,l-1 \}$, such that $\hat I_l\subset I_{s_l}$, where $\hat I_l:={I_{l}} \cap \left( {\bigcup_{j = 1}^{l-1} {{I_j}} } \right)$.
W.l.o.g, set $s_2:=1$ and  $\hat I_1:=\emptyset$. Denote $n_l:=|I_l|$ and $\hat n_l:=|\hat I_l|$, $l=1,\dots,p$.
\item[(ii)]  Structured sparsity pattern for the objective function\footnote{If there are $f_l$ in the sum $f$ such that $\deg(f_l)>\deg(f)$, we can always remove the high degree redundant term in $f_l$ which cancel with each other to make degree of $f_l$ at most $\deg(f)$}: $f = \sum_{l = 1}^p {{f_l}} $ where $f_l\in\R[x(I_l)]_{\deg(f)}$, $l=1,\dots,p$.
\item[(iii)]  Structured sparsity pattern for the constraints: $J = \bigcup_{l = 1}^p {{J_l}} $ and for every $j\in J_l$, $g_j\in\R[x(I_l)]$, $l=1,\dots,p$.
\item[(iv)] Additional redundant quadratic constraints: There exists $L>0$ such that $\|x\|_2^2 \le  L$ for all $x\in S(g)$ and ${L} - \|x(I_l)\|_2^2  \in g_{J_l}$, $l=1,\dots,p$.
\end{enumerate} 
\end{assumption}
With $\tau$ ($\leq n$) being the maximum number of variables appearing in each index subset $I_l$ of $f,g_j$, i.e., $\tau:=\max \{n_l\,:\, l=1,\dots,p\}$, Table \ref{tab:compare.sparse.dense} displays the respective computational complexity of the sparse hierarchy of Waki et al. \cite{waki2006sums} and the dense hierarchy of Lasserre \cite{lasserre2001global} for SDPs  with same order $k\in\N$.
\begin{table}
    \caption{\small Comparing the computational complexity of the sparse and dense hierarchies.}
    \label{tab:compare.sparse.dense}
\footnotesize
\begin{center}
\begin{tabular}{|m{3.5cm}|m{2.1cm}|m{2.1cm}|}
\hline 
SDP of order $k$ &  sparse hierarchy & dense hierarchy\\ 
\hline 
number of variables &
$O(\tau^{2k})$ & $O(n^{2k})$\\
 \hline 
largest size of SDP matrix &
$O(\tau^{k})$ & $O(n^k)$\\
 \hline
\end{tabular}    
\end{center}
\end{table}
Obviously the sparse hierarchy provides a potentially high  computational saving when compared to the dense one.
In addition, convergence of the hierarchy of Waki et al. to the optimal value of the original POP
was proved in \cite{lasserre2006convergent}, resulting in the following sparse version of Putinar's Positivstellensatz:
\begin{theorem}(Lasserre, Waki et al.) 
\label{theo:sparse.Putiner.rep}
Let Assumption \ref{assum:sparse.conditions} holds.
If a polynomial $f$ is positive on $S(g)$, then there exist ${\sigma _{0,l}} \in \Sigma {{[ x(I_l) ]}_{ k}}$, $ {\sigma _{j,l}} \in \Sigma {{[ x(I_l) ]}_{k - u_j}}$ with $u_j:=\lceil \deg(g_j)/2\rceil$, $ j\in J_l$, $  l=1,\dots,p$ such that 
\begin{equation}\label{eq:sparse.Putinar.Pos}
f =\sum\limits_{l = 1}^p {\left({\sigma _{0,l}} + \sum\limits_{j \in {J_l}} {{\sigma _{j,l}}{g_j}}\right)}\,.
\end{equation}
\end{theorem}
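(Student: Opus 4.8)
I would argue by induction on the number $p$ of blocks, peeling off one block at a time and using the running intersection property to keep track of the few variables shared between the removed block and the rest. The case $p=1$ is immediate: then $f=f_1\in\R[x(I_1)]$ with $f>0$ on $S(g_{J_1})$, and by Assumption~\ref{assum:sparse.conditions}(iv) the polynomial $L-\|x(I_1)\|_2^2$ lies in $g_{J_1}$, so $S(g_{J_1})$ is compact and the quadratic module generated by $g_{J_1}$ inside $\R[x(I_1)]$ is Archimedean; the classical (dense) Putinar Positivstellensatz then produces $f=\sigma_{0,1}+\sum_{j\in J_1}\sigma_{j,1}g_j$ with all multipliers in $\Sigma[x(I_1)]$, which is \eqref{eq:sparse.Putinar.Pos}.

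\textbf{Peeling off the last block.} Assume the conclusion for $p-1$ blocks and let $f=\sum_{l=1}^p f_l$ satisfy Assumption~\ref{assum:sparse.conditions}. Relabel so that $I_p$ is the last block; by RIP, $\hat I_p=I_p\cap\bigcup_{l<p}I_l\subset I_{s_p}$ for some $s_p<p$, and $x(I_p)$ and $x(\bigcup_{l<p}I_l)$ share exactly the variables $x(\hat I_p)$. Put $h:=\sum_{l=1}^{p-1}f_l\in\R[x(\bigcup_{l<p}I_l)]$ and $J':=\bigcup_{l<p}J_l$, so $f=h+f_p$ and $J=J'\cup J_p$. The crux is to find a \emph{transfer polynomial} $\phi\in\R[x(\hat I_p)]$ with $f_p-\phi>0$ on $S(g_{J_p})$ and $h+\phi>0$ on $S(g_{J'})$, both of which are compact by Assumption~\ref{assum:sparse.conditions}(iv). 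For $y\in D_p:=\pi_{\hat I_p}(S(g_{J_p}))$ set $\mu_p(y):=\min\{f_p(v):v\in S(g_{J_p}),\,v(\hat I_p)=y\}$, and for $y\in D_h:=\pi_{\hat I_p}(S(g_{J'}))$ set $\mu_h(y):=\min\{h(u):u\in S(g_{J'}),\,u(\hat I_p)=y\}$; these minima are attained and $\mu_p,\mu_h$ are continuous on their compact domains. For $y\in D_p\cap D_h$, pick minimizers $v^\star$ and $u^\star$; since $I_p\cap\bigcup_{l<p}I_l=\hat I_p$ and $v^\star,u^\star$ agree on $\hat I_p$, they glue to a point $x^\star\in\R^n$ (recall $I_p\cup\bigcup_{l<p}I_l=I$), and since every $g_j$ with $j\in J_p$ involves only $x(I_p)$ while every $g_j$ with $j\in J'$ involves only variables in $\bigcup_{l<p}I_l$, we get $g_j(x^\star)\ge0$ for all $j\in J$, i.e.\ $x^\star\in S(g)$. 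Hence $0<f(x^\star)=h(u^\star)+f_p(v^\star)=\mu_h(y)+\mu_p(y)$, so $-\mu_h<\mu_p$ on the compact set $D_p\cap D_h$, with a uniform gap.

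\textbf{Building $\phi$ and concluding.} On $D_h$ the requirement on $\phi$ is $\phi>-\mu_h$, on $D_p$ it is $\phi<\mu_p$; on the overlap these are compatible by the previous step and off the overlap each is one-sided. A standard Tietze/Urysohn patching then produces a continuous function on the compact set $D_h\cup D_p$ that is strictly sandwiched between the required bounds with a uniform margin, and Stone--Weierstrass supplies a polynomial $\phi\in\R[x(\hat I_p)]$ with the same property. Now $f_p-\phi>0$ on the compact, Archimedean set $S(g_{J_p})\subset\R^{n_p}$, so dense Putinar gives $f_p-\phi=\sigma_{0,p}+\sum_{j\in J_p}\sigma_{j,p}g_j$ with multipliers in $\Sigma[x(I_p)]$. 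On the other side, replacing $f_{s_p}$ by $f_{s_p}+\phi\in\R[x(I_{s_p})]$ (legitimate since $\hat I_p\subset I_{s_p}$) and leaving the other $f_l$ ($l<p$) unchanged, the polynomial $h+\phi=\sum_{l=1}^{p-1}f'_l$ with the constraints $g_{J_l}$, $l\le p-1$, satisfies Assumption~\ref{assum:sparse.conditions} (RIP is inherited and (ii)--(iv) hold blockwise), and $h+\phi>0$ on $S(g_{J'})$; the induction hypothesis yields $h+\phi=\sum_{l=1}^{p-1}(\sigma_{0,l}+\sum_{j\in J_l}\sigma_{j,l}g_j)$ with multipliers in $\Sigma[x(I_l)]$. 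Adding the two representations gives \eqref{eq:sparse.Putinar.Pos}, and choosing $k\in\N$ larger than the degrees of the finitely many SOS multipliers obtained (so that $\deg\sigma_{0,l}\le 2k$ and $\deg(\sigma_{j,l}g_j)\le 2k$ for all $l,j$) places $\sigma_{0,l}\in\Sigma[x(I_l)]_k$ and $\sigma_{j,l}\in\Sigma[x(I_l)]_{k-u_j}$ as claimed.

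\textbf{Main obstacle.} Everything except the transfer polynomial is bookkeeping plus two invocations of the classical Putinar Positivstellensatz. The delicate point is the existence of $\phi$: establishing the strict gap $\mu_h+\mu_p>0$ on the overlap $D_p\cap D_h$ via the gluing construction (which is exactly where the running intersection property and the block structure of the constraints enter), and then passing from a continuous sandwiched function to a polynomial one on the compact domain $D_h\cup D_p$.
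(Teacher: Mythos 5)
You should first note that the paper does not actually prove Theorem~\ref{theo:sparse.Putiner.rep}: it is quoted from Lasserre \cite{lasserre2006convergent} and from Grimm--Netzer--Schweighofer \cite{grimm2007note}, and your argument is essentially the latter's (it is also the same strategy the authors adapt to the non-compact homogeneous setting in Lemmas \ref{lem:p.equal.2} and \ref{lem:sparse.posi.form.general.case}): induct on $p$, build a transfer polynomial in the shared variables $x(\hat I_p)$, apply dense Putinar to the peeled block and the induction hypothesis to the rest. Your gluing argument giving $\mu_h+\mu_p>0$ on $D_h\cap D_p$ is correct, and it is indeed exactly where the RIP and the block structure of the constraints enter; the final bookkeeping (base case via dense Putinar, Archimedeanity of each block from $L-\|x(I_l)\|_2^2\in g_{J_l}$, compactness of $S(g_{J'})$ from the per-block ball constraints, degree bound by taking $k$ large) is fine.

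There is, however, a genuine gap at the step you yourself flag as delicate: you assert that $\mu_p(y)=\min\{f_p(v):v\in S(g_{J_p}),\,v(\hat I_p)=y\}$ and $\mu_h$ are \emph{continuous} on their compact domains. This is false in general: the fiber correspondence $y\mapsto\{v\in S(g_{J_p}):v(\hat I_p)=y\}$ is upper but not necessarily lower hemicontinuous, so the value function is only lower semicontinuous. Concretely, take $S(g_{J_p})=\{(\xi,y):\xi\ge 0,\ y\ge 0,\ -\xi y\ge 0,\ 1-\xi\ge 0,\ 1-y\ge 0\}=(\{0\}\times[0,1])\cup([0,1]\times\{0\})$ and $f_p=-\xi$; then $\mu_p(y)=0$ for $y\in(0,1]$ but $\mu_p(0)=-1$. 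Consequently the ``standard Tietze/Urysohn patching'' is not available as stated. The argument can be repaired, but this must be said: $\mu_p,\mu_h$ are l.s.c.\ with attained minima, the uniform gap on $D_h\cap D_p$ survives (an l.s.c.\ positive function on a compact set has positive infimum), and one can strictly insert a continuous function between the u.s.c.\ function $-\mu_h$ (extended by $-\infty$ off $D_h$) and the l.s.c.\ function $\mu_p$ (extended by $+\infty$ off $D_p$), e.g.\ by the Kat\v{e}tov--Tong/Michael insertion theorem, or by writing the u.s.c.\ minorant as a decreasing limit of continuous functions and using compactness; the resulting uniform margins then allow Stone--Weierstrass to produce the polynomial $\phi\in\R[x(\hat I_p)]$. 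It is precisely to sidestep this issue that the paper's Lemma~\ref{lem:p.equal.2} minimizes over an \emph{unconstrained} fiber of a coercive function, where continuity can be proved directly. A minor additional point: when invoking the induction hypothesis, Assumption \ref{assum:sparse.conditions}(iv) for the reduced data should be checked in the weaker form actually used (the retained constraints $L-\|x(I_l)\|_2^2\in g_{J_l}$, $l\le p-1$, give compactness of $S(g_{J'})$ with radius $(p-1)L$ and Archimedeanity of each block), rather than claimed verbatim.
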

\emph{Compactness} of the feasible set $S(g)$ is a crucial ingredient of the proof in \cite{lasserre2006convergent}; shortly after, Grimm et al. \cite{grimm2007note} provided another (simpler) proof where ${\rm int}(S(g))\neq\emptyset$ is not needed, but where compactness of $S(g)$ is still a crucial assumption.

\paragraph{Motivation for sparse representations on non-compact sets.}
 We remark that Theorem \ref{theo:sparse.Putiner.rep} requires the additional redundant quadratic constraints (Assumption \ref{assum:sparse.conditions} $(iv)$), which is slightly stronger than just assuming the compactness of $S(g)$. 
When $S(g)$ is compact, we can always add these constraints but we need to know the radius $L>0$ of a ball centered at the origin and containing $S(g)$.
In this case, adding such constraints increases the number of positive semidefinite matrices from $m$ to $m+p$ in each 
SDP. In addition, it may be hard to verify compactness of $S(g)$ and obtain such a radius $L$.

To the best of our knowledge, in the non-compact case there is still 
no Positivstellensatz allowing one to build hierarchies for POPs satisfying :

- the RIP and the structured sparsity pattern from Assumption \ref{assum:sparse.conditions} $(i)$-$(iii)$,

- and a guarantee of convergence to the global optimum.

In fact we provide examples \ref{exam:zero.parameter.epsilon}, \ref{exam:zero.parameter.epsilon.noncommutative}, and \ref{exam:sum.pos.on.sem.set}, which show that in both unconstrained and constrained cases, there exist sparse nonnegative polynomials which do \emph{not} have a sparse SOS-based decomposition
\eqref{eq:sparse.Putinar.Pos} \`a la Putinar.
Such examples have been our motivation to investigate existence of sparse representations in the non-compact case, as well as to construct converging SDP-hierarchies for sparse polynomial optimization in general.
\paragraph{Dense rational SOS representations and non-compact POPs.}
In his famous and seminal work \cite{hilbert1888darstellung}, Hilbert characterized all cases where nonnegative polynomials are SOS of polynomials.
In 1927, Artin proved in~\cite{artin1927zerlegung} that every nonnegative polynomial can be decomposed as an SOS of {\em rational functions} (or {\em rational SOS} for short), thereby solving Hilbert's 17th problem.  
Namely, a polynomial $f$ is nonnegative if and only if there exist $\sigma_1, \sigma_2 \in \Sigma[x]$ such that 
$ f = \sigma_1/\sigma_2$.

Of course one can use Hilbert-Artin's representation to obtain a hierarchy of lower bounds for unconstrained POPs: $f^\star := \inf_{x \in \R^n} f(x)$, by computing $\rho_k:=\sup\{\lambda\in \R \,:\,\sigma_2(f-\lambda)=\sigma_1,\ \sigma_j\in\Sigma[x]_k\}$, for every $k\in \N$, so that $\rho_k\leq \rho_{k+1}\leq f^\star$ for all $k$.
However for each $k$ the resulting optimization problem is not an SDP (and not even convex) because of the nonlinear 
term $\sigma_2 \lambda$. (Even with an iterative dichotomy procedure on $\lambda$, one is left with an SDP hierarchy for \emph{each} fixed $\lambda$.)

When $f$ is a positive definite form Reznick proposes to select a so-called {\em uniform denominator}  in the Hilbert-Artin's representation, namely to replace $\sigma_2$ by some power of $\|x\|_2^2$ (see Table \ref{tab:top.Positivstellensatz}). 
As a result one obtains a decomposition in SOS of rational functions  for any arbitrary small perturbation of a nonnegative polynomial $f$ as follows: 
For every $\varepsilon>0$, there exists $k\in\N$ such that $\theta^k(f+\varepsilon\theta^d)=\sigma_0$ for some  $\sigma_0 \in \Sigma[x]$, with $d:=\lceil \deg(f)/2\rceil$ and $\theta:=\|x\|^2_2+1$. 
For abitrary $\varepsilon > 0$ fixed, we obtain an SDP-based hierarchy of bounds $\rho_k(\varepsilon)\:=\sup\{\lambda\in \R \,:\,\theta^k(f-\lambda+\varepsilon\theta^d)=\sigma_0\,,\, \sigma_0\in\Sigma[x]_{k+d}\}$, for every $k\in \N$. 
If $f^\star$ is attained then the sequence $(\rho_k(\varepsilon))_{k\in\N}$ converges to a value in a neighborhood of $f^\star$.
A similar idea, now based on Putinar-Vasilescu's Positivstellensatz \cite{putinar1999solving}, can be applied for polynomials nonnegative on non-compact basic semialgebraic sets (see Table \ref{tab:top.Positivstellensatz}).

This shows that rational SOS representations with \emph{fixed} forms for denominators are highly useful and applicable in non-compact POPs.
\paragraph{Contribution.} 
Our contribution is twofold:
\begin{itemize}
\item We first provide a rational SOS representation for a positive definite rational form which is a sum of sparse rational functions with uniform denominators, satisfying the structured sparsity pattern and the RIP stated in Assumption \ref{assum:sparse.conditions} $(i)$. 
This representation is provided in Theorem \ref{theo:rep.sum.pd}. 
As a direct consequence, we obtain a sparse version of Reznick's Positivstellensatz in Corollary \ref{coro:sparse.reznick}.
\item Then, we provide two positivity certificates for arbitrary small perturbations of -- globally nonnegative polynomials in Corollary \ref{coro:rep.sum.positive} --  and polynomials nonnegative on a (possibly non-compact) basic semialgebraic set in Corollary \ref{coro:rep.sum.positive.con},  when the input data satisfy a similar sparsity pattern. 
These two certificates are obtained via a sparse version of Putinar-Vasilescu's Positivstellensatz and do not require the additional constraints from Assumption \ref{assum:sparse.conditions} $(iv)$.
\end{itemize}
Illustrations of such positivity certificates for polynomials nonnegative on non-compact basic semialgebraic sets are 
provided in Example \ref{ex:homogeneous.exam}, \ref{exam:zero.parameter.epsilon}, \ref{exam:zero.parameter.epsilon.noncommutative} and \ref{exam:sum.pos.on.sem.set}, for which
positvity certificates \eqref{put} do not exist.
The existence of such sparse SOS-representations is proved by combining different tools:
\begin{itemize}
\item First, we use an idea similar to that developed in Grimm et al. \cite{grimm2007note} (in the compact case) to prove that a sparse positive definite form can be decomposed as SOS of sparse positive definite rational forms; 
as expected the non-compact case is rechnically more involved.
This yields a \emph{sparse version} of Hilbert-Artin's representation theorem in the case of positive definite forms.
\item Next, we use generalizations of Schm\"udgen's Positivstellensatz presented by Schweighofer \cite{schweighofer2003iterated}, Berr-W{\"o}rmann \cite{berr2001positive}, Jacobi \cite{jacobi2001representation}, and Marshall \cite{marshall2001extending, marshall2002general}, for a finitely generated $\R$-algebra  in each term of the sum, to obtain again a \emph{sparse version}, this time of Reznick's Positivstellensatz for positive definite forms.
\item Finally we combine the homogenization/dehomogenization method that we already used in \cite{mai2019sums} together  with limit tools, to provide the two sparse versions of Putinar-Vasilescu's Positivstellensatz.
\end{itemize}
\section{Main results}
 For $(i,j)\in\N^2$, we denote the Kronecker delta function by
\[\delta_{i,j} := \left\{ \begin{array}{rl}
1 &\text{ if }i=j\,,\\
0&\text{ if }i\ne j\,.
\end{array} \right.\]
When Assumption \ref{assum:sparse.conditions} $(i)$ holds, define:
\[\Phi_l:=\begin{cases}{\|x(\hat I_l)\|_2^{2(1-\delta_{l,1})}\prod_{j = l+1}^{p}{ \|x(\hat I_j)\|_2^{2\delta_{l,s_j}}}} &\text{ if } l=1,\dots,p-1\,,\\
\|x(\hat I_l)\|_2^{2(1-\delta_{l,1})}&\text{ if }l=p\,.\end{cases}\] 
Obviously, one has $\Phi_l\in \R[x( I_l)]$, for each $l=1,\dots,p$.
Let us state the first main result of this paper which yields a sparse version of Reznick's PositivStellensatz as a particular case.
\begin{theorem}\label{theo:rep.sum.pd}
Let Assumption \ref{assum:sparse.conditions} $(i)$ holds.
Let $f\in\R(x)$ be a positive definite rational form of degree $2d$ with $d\in \N^{>0}$ such that 
\[f=\sum_{l=1}^p \frac{p_l}{\|x( I_l)\|_2^{2k_l}}\,,\]
where $p_l\in\R[x(I_l)]$ is homogeneous of degree $2(d+k_l)$ for some $k_l\in \N$, $l=1,\dots,p$. 
Then there exist $k\in\N$ and $\sigma_l\in \Sigma[x( I_l)]_{d+k (1+\deg(\Phi_l)/2)}$, $l=1,\dots,p$,  such that 
\begin{equation}\label{eq:homogeneous}
f=\sum\limits_{l = 1}^p {\frac{\sigma_l}{\|x( I_l)\|_2^{2k}\Phi_l^k}}\,.
\end{equation}
\end{theorem}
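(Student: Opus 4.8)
The plan is to prove \eqref{eq:homogeneous} in two stages. The first stage produces a \emph{sparse Hilbert--Artin representation} of $f$ by an induction on the number $p$ of blocks, peeling off the last block $I_p$ at each step by a splitting argument in the spirit of Grimm et al., adapted to the homogeneous, non-compact setting. The second stage converts, block by block, the resulting rational forms into ones with the prescribed uniform denominators and SOS numerators, using Reznick's Positivstellensatz together with the non-compact generalizations of Schm\"udgen's Positivstellensatz for finitely generated $\R$-algebras due to Schweighofer, Berr--W\"ormann, Jacobi and Marshall.

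For the base case $p=1$ one has $\hat I_1=\emptyset$ and $\Phi_1=1$; here $f=p_1/\|x\|_2^{2k_1}$ with $p_1$ a form, and positive definiteness of $f$ forces $p_1$ to be positive definite, so Reznick's theorem supplies $k'\in\N$ with $\|x\|_2^{2k'}p_1\in\Sigma[x]$. Taking $k:=k_1+k'$ and $\sigma_1:=\|x\|_2^{2k'}p_1$ gives $f=\sigma_1/\|x\|_2^{2k}$ with $\deg\sigma_1=2(d+k)$, matching the stated bound. For $p\ge 2$, set $E:=\bigcup_{j<p}I_j$, so $I=E\cup I_p$ and $E\cap I_p=\hat I_p\subseteq I_{s_p}$, and split the variables as $x=(u,z,v)$ with $u:=x(E\setminus I_p)$, $z:=x(\hat I_p)$ and $v:=x(I_p\setminus E)$. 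The core of the induction step is the construction of a \emph{transfer form} $\psi\in\R(z)$, homogeneous of degree $2d$ and with denominator a power of $\|z\|_2^2$, such that
\[
f=\Big(\textstyle\sum_{j<p}\tfrac{p_j}{\|x(I_j)\|_2^{2k_j}}-\psi\Big)+\Big(\tfrac{p_p}{\|x(I_p)\|_2^{2k_p}}+\psi\Big)
\]
as an identity in $\R(x)$, with the first summand $g$ a positive definite rational form in $x(E)$ that still carries the sparse structure and the RIP of the sub-cover $I_1,\dots,I_{p-1}$ (the term $-\psi$ being absorbed into the $I_{s_p}$-block, since $\hat I_p\subseteq I_{s_p}$), and the second summand $h$ a positive definite rational form in $x(I_p)$. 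As in Grimm et al., $\psi$ is built by interpolating, by a rational form in $z$ with numerator of sufficiently high degree, a continuous function of $z$ that separates the $z$-sections of the block-$p$ term from those of $\sum_{j<p}p_j/\|x(I_j)\|_2^{2k_j}$ (a restriction to the unit sphere makes this separating function well defined); the extra factor $\|z\|_2^2=\|x(\hat I_p)\|_2^2$ that this interpolation inserts into the $I_{s_p}$-block denominator, accumulated over the steps $l=p,p-1,\dots,2$, is exactly what builds the factors $\|x(\hat I_j)\|_2^2$ composing the $\Phi_l$. Applying the induction hypothesis to $g$ (stated for $p-1$ blocks, in the mild generality permitting denominators that are a power of $\|x(I_l)\|_2^2$ times a monomial in the $\|x(\hat I_j)\|_2^2$) and combining with the one-block treatment of $h$ yields, after $p-1$ steps,
\[
f=\sum_{l=1}^p\frac{q_l}{\|x(I_l)\|_2^{2a_l}\Phi_l^{b_l}},
\]
where each $q_l\in\R[x(I_l)]$ is a nonnegative form whose real zero locus lies inside $\{\Phi_l=0\}$.

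It then remains to choose a single $k\in\N$, larger than every exponent that has appeared, so that $\sigma_l:=q_l\,\|x(I_l)\|_2^{2(k-a_l)}\Phi_l^{k-b_l}\in\Sigma[x(I_l)]$ for all $l$; since $\|x(I_l)\|_2^2$ and $\Phi_l$ are products of squared norms, hence SOS, this, together with $\sigma_l$ being SOS, is precisely \eqref{eq:homogeneous}, and the degree bound follows by a short computation. If each $q_l$ were positive definite this would be Reznick's theorem in the variables $x(I_l)$; the obstruction is that $q_l$ is only positive \emph{semi}definite, vanishing on the subspace arrangement $\{\Phi_l=0\}$, which is also the zero set of the weight $\Phi_l$. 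To get past this one works in the finitely generated $\R$-algebra attached to block $l$ and invokes the non-compact Positivstellens\"atze of Schweighofer, Berr--W\"ormann, Jacobi and Marshall: after a suitable power of $\|x(I_l)\|_2^2$ is adjoined, $\Phi_l$ becomes a bounded element whose positivity locus is the complement of that arrangement, on which $q_l$ is strictly positive (its zero locus being inside $\{\Phi_l=0\}$), so these theorems yield a representation $q_l\,\|x(I_l)\|_2^{2N}=\tau_0+\Phi_l\tau_1$ with $\tau_0,\tau_1\in\Sigma[x(I_l)]$; multiplying by appropriate powers of the SOS forms $\|x(I_l)\|_2^2$ and $\Phi_l$ and re-collecting then produces the required $\sigma_l$.

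The main obstacle is the splitting lemma in the non-compact case: manufacturing $\psi(z)$ so that \emph{both} $g$ and $h$ remain positive definite -- not merely nonnegative -- while keeping everything homogeneous of degree $2d$ and the denominators of the prescribed shape. In the compact setting of Grimm et al. there is slack to exploit, whereas here the behaviour at infinity and the homogeneity must be reconciled simultaneously, the relevant extremum over the $v$-directions need not be attained, and one must verify that the interpolation can be carried out with strict inequalities. A secondary, more bookkeeping-type difficulty is checking that the zero loci and vanishing orders of the $q_l$ are exactly what the block Positivstellens\"atze require.
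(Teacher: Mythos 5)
Your Stage 1 is essentially the paper's route (Lemmas \ref{lem:p.equal.2} and \ref{lem:sparse.posi.form.general.case}): peel off the last block, build a transfer form in the overlap variables of degree $2d$ with denominator a power of $\|x(\hat I_p)\|_2^2$ by approximating, on the unit sphere, a separating function obtained from a partial minimization, and absorb it into the $I_{s_p}$-block. But your Stage 2 has a genuine gap. You reduce to showing $q_l\,\|x(I_l)\|_2^{2N}=\tau_0+\Phi_l\tau_1$ with $\tau_0,\tau_1\in\Sigma[x(I_l)]$, justified by "$q_l$ is strictly positive on the positivity locus $\{\Phi_l>0\}$'' and the Schweighofer/Berr--W\"ormann/Jacobi/Marshall theorems. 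These theorems (see Lemma \ref{lem:positive.in.preordering}) require strict positivity on $\Sper_T A$, i.e.\ on the \emph{closed} set cut out by the preordering; since $\Phi_l$ is a product of squared norms, $\Phi_l\ge 0$ everywhere, so that set is all of the relevant space (or all of the sphere after your normalization), and $q_l$ vanishes there on the nontrivial arrangement $\{\Phi_l=0\}$. Positivity merely on the open set $\{\Phi_l>0\}$ does not yield a denominator-free representation in the preordering generated by $\Phi_l$ (Krivine--Stengle would only give an identity with an unknown preordering multiplier on $q_l$, not the prescribed multiplier $\|x(I_l)\|_2^{2N}$), and nonnegative forms with zeros in general admit no SOS representation after multiplication by powers of fixed SOS weights -- this is exactly the phenomenon that makes Reznick's theorem fail without positive definiteness. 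So the step "these theorems yield $q_l\|x(I_l)\|_2^{2N}=\tau_0+\Phi_l\tau_1$'' is unsupported and, as stated, is the crux of the whole proof.

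The paper avoids this by never asking for positivity of a polynomial numerator. In Lemma \ref{lem:sum.of.fractional} one keeps the positive definite \emph{rational} form $h_l$ itself as an element of the finitely generated $\R$-algebra $A$ generated by the variables and the fractional monomials $x(I')^{\alpha}/\|x(I')\|_2^{2k'}$ of the sub-blocks appearing in $h_l$ (these are continuous by Lemma \ref{lem:continuity.homogeneous}), with preordering generated by $\pm(1-\|x\|_2^2)$, so that $\Sper_T A$ is controlled by the unit sphere, where $h_l\ge\varepsilon>0$; Lemma \ref{lem:positive.in.preordering} then gives $h_l=\sigma+(1-\|x\|_2^2)\psi$ with $\sigma\in\Sigma A^2$, and substituting $x\mapsto x/\|x\|_2$ and clearing denominators produces the SOS numerator over the uniform denominator $\|x(I_l)\|_2^{2k}\Phi_l^k$. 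Note that this uses precisely the fractional-sum structure of $h_l$ that your Stage 1 produces but your Stage 2 discards when you pass to the polynomial $q_l$ and the single weight $\Phi_l$. To repair your argument, replace the claimed identity $q_l\|x(I_l)\|_2^{2N}=\tau_0+\Phi_l\tau_1$ by this algebra-of-bounded-elements argument applied to $h_l$ directly (or prove an analogue of Lemma \ref{lem:sum.of.fractional}); the rest of your plan, including the final padding by powers of the SOS polynomials $\|x(I_l)\|_2^2$ and $\Phi_l$ to reach a common exponent $k$, is consistent with the paper.
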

The proof of Theorem \ref{theo:rep.sum.pd} can be found in Section \ref{proof:rep.sum.pd}.
As a consequence, we obtain the following sparse version of Reznick's Positivstellensatz:
\begin{corollary}\label{coro:sparse.reznick}
Let Assumption \ref{assum:sparse.conditions} $(i)$ holds.
Assume that $f$ is a positive definite form of degree $2d$ with $d\in \N^{>0}$ and $f = \sum_{l=1}^p f_l$, where $f_l\in\R[x(I_l)]$ is homogeneous of degree $2d$, $l=1,\dots,p$. 
Then there exist $k\in\N$ and $\sigma_l\in \Sigma[x( I_l)]_{d+k (1+\deg(\Phi_l)/2)}$, $l=1,\dots,p$,  such that 
\begin{equation}\label{eq:sparse.reznick}
f=\sum\limits_{l = 1}^p {\frac{\sigma_l}{H_l^k}}\,,
\end{equation}
where $H_l:=\|x( I_l)\|_2^{2}\,\Phi_l$, $l=1,\dots,p$.
\end{corollary}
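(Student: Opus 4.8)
The plan is to derive Corollary \ref{coro:sparse.reznick} directly from Theorem \ref{theo:rep.sum.pd} by simply taking $k_l = 0$ for every $l$. Indeed, under the hypotheses of the corollary we have $f = \sum_{l=1}^p f_l$ with each $f_l \in \R[x(I_l)]$ homogeneous of degree $2d$; this is exactly the representation required in Theorem \ref{theo:rep.sum.pd} with $p_l := f_l$ and $k_l := 0$, since then $\|x(I_l)\|_2^{2k_l} = 1$ and $p_l$ is homogeneous of degree $2(d+k_l) = 2d$ as demanded. We must also check that $f$ itself is a positive definite rational form of degree $2d$ — but by assumption $f$ is a positive definite \emph{polynomial} form of degree $2d$, which is in particular a positive definite rational form of degree $2d$, so the hypothesis of Theorem \ref{theo:rep.sum.pd} is met.

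Applying Theorem \ref{theo:rep.sum.pd} then yields $k \in \N$ and SOS polynomials $\sigma_l \in \Sigma[x(I_l)]_{d+k(1+\deg(\Phi_l)/2)}$, $l=1,\dots,p$, such that
\[
f = \sum_{l=1}^p \frac{\sigma_l}{\|x(I_l)\|_2^{2k}\,\Phi_l^k}\,.
\]
It remains to rewrite the denominator in the claimed form. Since $\|x(I_l)\|_2^{2k}\,\Phi_l^k = \bigl(\|x(I_l)\|_2^{2}\,\Phi_l\bigr)^k = H_l^k$ with $H_l := \|x(I_l)\|_2^{2}\,\Phi_l$, the identity above is literally \eqref{eq:sparse.reznick}. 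Note that $H_l \in \R[x(I_l)]$ because $\|x(I_l)\|_2^2 \in \R[x(I_l)]$ and, as already observed in the excerpt, $\Phi_l \in \R[x(I_l)]$; moreover the degree of $\sigma_l$ matches the stated bound verbatim, and each $\sigma_l$, $H_l$ involves only the variables $x(I_l)$ appearing in $f_l$, as required.

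There is essentially no obstacle here: the corollary is a specialization of the theorem obtained by collapsing the rational structure (all $k_l = 0$), together with the cosmetic regrouping of the two denominator factors into the single polynomial $H_l$. The only point deserving a word of care is verifying that the degree bound on $\sigma_l$ transfers unchanged — which it does, since $\deg(\Phi_l)$ is the same quantity in both statements — and that the positive-definiteness hypothesis is inherited, which is immediate. Hence the proof amounts to the two-line substitution $p_l = f_l$, $k_l = 0$ followed by the factorization $H_l^k = \|x(I_l)\|_2^{2k}\Phi_l^k$.
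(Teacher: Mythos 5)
Your proposal is correct and coincides with the paper's own argument: the paper proves Corollary \ref{coro:sparse.reznick} precisely by applying Theorem \ref{theo:rep.sum.pd} with $k_l=0$ for all $l$, and your additional remarks (positive definite polynomial forms are positive definite rational forms, and $\|x(I_l)\|_2^{2k}\Phi_l^k=H_l^k$) are just the routine verifications left implicit there.
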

To prove Corollary \ref{coro:sparse.reznick}, we apply Theorem \ref{theo:rep.sum.pd} with $k_l=0$, $l=1,\dots,p$.
The representation \eqref{eq:sparse.reznick} can still  hold even when $f$ is not a positive definite form, as illustrated in the following example: 
\begin{example}\label{ex:homogeneous.exam}
Let $f=f_1+f_2$, where
\[f_1:=x_4^2(x_1^4x_2^2+x_2^4x_3^2+x_1^2x_3^4-3x_1^2x_2^2x_3^2)+x_3^8\]
is the so-called Delzell's polynomial and $f_2:=x_1^2x_2^2x_3^2x_5^2$. 
The polynomial $f_1$ is nonnegative, but not SOS  as shown in \cite[Example 2]{leep2001polynomials}. 
Let $I_1:=\{1,2,3,4\}$ and $I_2:=\{1,2,3,5\}$.
Then $f_1\in \R[x(I_1)]$ and $f_2 \in \R[x(I_2)]$ are nonnegative and homogeneous of degree $8$. Since $f_1$ is nonnegative then $f$ is nonnegative. The following statements hold:
\begin{enumerate}
\item $f$ is a nonnegative form, but is not positive definite;
\item $f\notin \Sigma[x(I_1)]+\Sigma[x(I_2)]$, but $f\in \frac{\Sigma[x(I_1)]_6}{\|x( I_1)\|_2^{2}\Phi_1}+ \frac{\Sigma[x(I_2)]_6}{\|x( I_2)\|_2^{2}\Phi_2}$.
\end{enumerate}
The first statement follows from the fact that $f(0,0,0,1,1)=0$, ensuring that $f$ is not a positive definite form.

\noindent
Proof of the second statement:  
Assume by contradiction that $f=\sigma_1+\sigma_2$ for some $\sigma_l\in\Sigma[x(I_l)]$, $l=1,2$.
Evaluation at $x_5=0$ yields $f_1=\sigma_1+\sigma_2(x_1,x_2,x_3,0)$, so that $f_1$ is an SOS, which is impossible. 
Thus, $f\notin \Sigma[x(I_1)]+\Sigma[x(I_2)]$. 
However,  $(x_1^2+x_2^2+x_3^3)f_1$ is SOS  according to \cite[Example 4.4]{schabert2019uniform}, so $(x_1^2+x_2^2+x_3^2)f\in \Sigma[x(I_1)]_{5}+\Sigma[x(I_2)]_5$. 
Note that $\Phi_1=\Phi_2=x_1^2+x_2^2+x_3^2$. 
Therefore
\[f\in \frac{\Sigma[x(I_1)]_{5}}{\Phi_1}+\frac{\Sigma[x(I_2)]_5}{\Phi_2} \subset \frac{\Sigma[x(I_1)]_6}{H_1}+ \frac{\Sigma[x(I_2)]_6}{H_2}\,.\]
\end{example}

 When Assumption \ref{assum:sparse.conditions} $(i)$ holds, define the following polynomials, for each $l=1,\dots,p$:
\begin{itemize}
\item $\theta_l:=\|x(I_l)\|^2_2+1$ and $ \hat \theta_l:=\|x(\hat I_l)\|^2_2+1$;
\item $D_l:=\begin{cases} {\hat \theta_l^{1-\delta_{l,1}}\prod_{j = l+1}^{p}{ \hat \theta_j^{\delta_{l,s_j}}}}&\text{ if }l < p\,,\\
\hat \theta_l^{1-\delta_{l,1}}&\text{ if }l=p\,;
\end{cases}$
\item $\Theta_l:=\theta_l D_l$ and $\omega_l:=\deg(\Theta_l)/2$.
\end{itemize}
Note that $\Theta_l\in\Sigma[x(I_l)]_{\omega_l}$, for each $l=1,\dots,p$.

We next state the following \emph{sparse version} of Putinar-Vasilescu's Positivstellensatz for 
polynomials nonnegative on the whole $\R^n$.
\begin{corollary}\label{coro:rep.sum.positive}
Let $f$ be a nonnegative polynomial such that the conditions $(i)$ and $(ii)$ of Assumption \ref{assum:sparse.conditions} hold.
Let $\varepsilon>0$ and $d\ge  \deg(f)/2 $. 
Then there exist $k\in\N$ and   $\sigma_l\in \Sigma[x( I_l)]_{d+k\omega_l}$, $l=1,\dots,p$,  such that 
\begin{equation}\label{eq:dehomogeneous}
f+\varepsilon \sum\limits_{l = 1}^p {\theta_l^{d}}=\sum\limits_{l = 1}^p {\frac{\sigma_l }{\Theta_l^k}}\,.
\end{equation}
\end{corollary}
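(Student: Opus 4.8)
The plan is to reduce the dehomogenized statement to the homogeneous Theorem \ref{theo:rep.sum.pd} by the standard homogenization/dehomogenization device already used in \cite{mai2019sums}, being careful that homogenization respects the sparsity pattern $\{I_l\}$. First I would introduce a fresh variable $x_0$ and work with the index sets $I_l' := I_l \cup \{0\}$, which still satisfy the running intersection property (the extra common variable $x_0$ belongs to every $I_l'$, so RIP is preserved with the same $s_l$). Given the decomposition $f = \sum_{l=1}^p f_l$ with $f_l \in \R[x(I_l)]_{\deg f}$ from Assumption \ref{assum:sparse.conditions}$(ii)$, I would homogenize each summand: writing $2d \ge \deg f$, set $\tilde f_l(x_0, x(I_l)) := x_0^{2d} f_l(x(I_l)/x_0)$, a form of degree $2d$ in the variables $x(I_l')$, and likewise observe that $x_0^{2d}\,\theta_l(x(I_l)/x_0) = (\|x(I_l)\|_2^2 + x_0^2)$ raised appropriately, i.e.\ the homogenization of $\theta_l^d$ is $(\|x(I_l)\|_2^2 + x_0^2)^d = \|x(I_l')\|_2^{2d}$. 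Thus the homogenization of $f + \varepsilon\sum_l \theta_l^d$ is $F := \sum_{l=1}^p \bigl(\tilde f_l + \varepsilon \|x(I_l')\|_2^{2d}\bigr)$, where each bracketed term lies in $\R[x(I_l')]$ and is homogeneous of degree $2d$.

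The next step is to verify that $F$ is a \emph{positive definite} form on $\R^{n+1}$ (minus the origin), so that Theorem \ref{theo:rep.sum.pd} applies. On the affine chart $x_0 = 1$ one has $F(1, x) = f(x) + \varepsilon \sum_l \theta_l(x(I_l))^d > 0$ since $f \ge 0$ and $\theta_l \ge 1$; the issue is the behaviour as $x_0 \to 0$. Here the point is that $\tilde f_l$ is a genuine polynomial (homogenization of a polynomial of degree $\le 2d$), so it vanishes on $\{x_0 = 0\}$ only together with lower-order terms, while $\varepsilon\|x(I_l')\|_2^{2d}$ restricted to $\{x_0 = 0\}$ equals $\varepsilon \|x(I_l)\|_2^{2d}$; summing and using that $F$ is a sum over $l$ and the union of the $I_l$ is all of $I$, one gets that on $\{x_0 = 0, (x_1,\dots,x_n)\ne 0\}$ the value $F(0,x) = \sum_l \bigl(\text{top-degree part of } f_l\bigr) + \varepsilon\sum_l \|x(I_l)\|_2^{2d}$; the second sum is strictly positive whenever $x \ne 0$, and a short argument (bounding the first sum below by $-C\,(\text{something of degree}<2d$ is not available since everything is degree $2d$), so instead I would argue that $\varepsilon$ times the sum of $\|x(I_l)\|_2^{2d}$ dominates after noticing the top-degree part of $f_l$ is itself the leading form of a polynomial that need not be nonnegative — this is the delicate point and is exactly where a careful estimate, possibly shrinking to a statement valid for the given fixed $\varepsilon$, is required. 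Concretely I expect one shows $F(0,\cdot)$ is positive definite by writing $f_l = (\text{deg-}2d\text{ part}) + (\text{lower})$ and observing $\sum_l(\text{deg-}2d\text{ part of }f_l) = (\text{deg-}2d\text{ part of }f)$, which is $\ge 0$? — no, it need not be; the rescue is that we only need $F$ positive definite, and since $\varepsilon\sum_l\|x(I_l')\|_2^{2d}$ is strictly positive on $\{x_0=0\}\setminus\{0\}$ while the $\tilde f_l$ there contribute the top form of $f$, we in fact need $f$'s top-degree form to make the whole thing positive; this forces choosing $2d$ large enough (e.g.\ $2d > \deg f$) so that the top part of each $\tilde f_l$ on $\{x_0 = 0\}$ vanishes and only $\varepsilon\|x(I_l)\|^{2d}$ survives, giving positivity immediately. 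So the clean route is to take $d$ strictly greater than $\deg(f)/2$ when necessary, or handle the boundary degree case separately.

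Once $F$ is established to be a positive definite form admitting the sparse representation $F = \sum_l G_l$ with $G_l \in \R[x(I_l')]$ homogeneous of degree $2d$, I would apply Theorem \ref{theo:rep.sum.pd} (with $k_l = 0$, as in Corollary \ref{coro:sparse.reznick}, but now over the variable set $I_l'$) to obtain $k \in \N$ and $\tilde\sigma_l \in \Sigma[x(I_l')]$ with $F = \sum_l \tilde\sigma_l / H_l^k$, where $H_l = \|x(I_l')\|_2^2 \, \Phi_l'$ and $\Phi_l'$ is the corresponding product of $\|x(\hat I_l')\|_2^2$ factors. The final step is dehomogenization: substitute $x_0 = 1$. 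Then $F(1,x) = f + \varepsilon\sum_l\theta_l^d$, the denominator $H_l|_{x_0=1} = (\|x(I_l)\|_2^2+1)\,\Phi_l'|_{x_0=1} = \theta_l\,\widehat{\text{(product of }\hat\theta\text{'s)}} = \Theta_l$ by construction of $D_l$ and $\Theta_l$ (matching $\hat I_l' = \hat I_l \cup\{0\}$ so $\|x(\hat I_l')\|_2^2 + $ nothing dehomogenizes to $\hat\theta_l$), and $\tilde\sigma_l|_{x_0=1} =: \sigma_l \in \Sigma[x(I_l)]$ remains a sum of squares. Tracking degrees through Theorem \ref{theo:rep.sum.pd}'s bound $d + k(1+\deg(\Phi_l')/2)$ and noting $\deg(\Theta_l)/2 = \omega_l = 1 + \deg(\Phi_l')/2$ after dehomogenization gives $\sigma_l \in \Sigma[x(I_l)]_{d + k\omega_l}$, yielding \eqref{eq:dehomogeneous}. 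The main obstacle is the positive-definiteness verification of the homogenized form $F$ on the hyperplane $x_0 = 0$; everything else is bookkeeping on index sets and degrees.
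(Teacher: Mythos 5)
Your overall route coincides with the paper's: adjoin a fresh homogenization variable to every $I_l$, note that the RIP is preserved and that the homogenization of $\theta_l^d$ is $\|\bar x(\bar I_l)\|_2^{2d}$, show that the homogenized perturbed polynomial is a positive definite form, apply Theorem \ref{theo:rep.sum.pd} (with $k_l=0$) to the sparse decomposition into the terms $\bar f_l+\varepsilon\|\bar x(\bar I_l)\|_2^{2d}$, and dehomogenize; your identification of the dehomogenized denominators with $\Theta_l$ and the degree bookkeeping are also as in the paper. The genuine gap is exactly at the step you flag as ``the delicate point'': you doubt that the degree-$2d$ homogeneous part of $f$ is nonnegative, and you then retreat to requiring $2d>\deg f$, leaving ``the boundary degree case'' unhandled. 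Since the corollary allows $d=\deg(f)/2$, your proposal as written does not prove the stated result.

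The difficulty is in fact illusory. Since $f\ge 0$ on $\R^n$ and $2d\ge\deg f$ with $2d$ even, the homogenization $\bar f(x,x_{n+1})=x_{n+1}^{2d}f(x/x_{n+1})$ is nonnegative wherever $x_{n+1}\ne 0$, hence, being a polynomial, nonnegative on all of $\R^{n+1}$ by continuity; equivalently, the degree-$2d$ part of $f$ equals $\lim_{t\to\infty}t^{-2d}f(tx)\ge 0$ (and is identically zero when $\deg f<2d$). Your observation that $\sum_l(\text{degree-}2d\text{ part of }f_l)$ is the degree-$2d$ part of $f$ was the right one --- individual terms may be negative, but the sum is nonnegative, contrary to your ``no, it need not be''. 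With $\bar f\ge 0$ everywhere, positive definiteness of $F=\bar f+\varepsilon\sum_{l}\|\bar x(\bar I_l)\|_2^{2d}$ is immediate and needs no estimate at all: if $F(y)=0$, then every nonnegative summand vanishes, so $y(\bar I_l)=0$ for each $l$, and $\bar I=\bigcup_l\bar I_l$ forces $y=0$. This is precisely the argument in the paper's proof; inserting it closes your gap, and no strict inequality on $d$ (nor any separate boundary case) is needed.
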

The proof of Corollary \ref{coro:rep.sum.positive} is postponed to Section \ref{proof:rep.sum.positive}.

The representation \eqref{eq:dehomogeneous} can still  hold even if $\varepsilon=0$, as illustrated in the following examples: 
\begin{example}\label{exam:zero.parameter.epsilon}
Let $f=f_1+f_2$, where 
\[f_1:=8+\frac{1}{2} x_1^2 x_2^4+(x_1^2 - 2 x_1^3)x_2^3 + (2x_1 +10 x_1^2 +4x_1^3+3x_1^4)x_2^2+4(x_1-2x_1^2)x_2\]
is the so-called Leep-Starr's polynomial and $f_2:=x_1^2x_3^2$. 
Let $I_1:=\{1,2\}$ and $I_2:=\{1,3\}$, so that 
 $f_1\in \R[x(I_1)]$ and $f_2\in \R[x(I_2)]$. 
 As shown in \cite[Example 2]{leep2001polynomials}, $f_1$ is nonnegative but not an SOS. 
In addition, $f$ is nonnegative.

We claim that $f\notin \Sigma[x(I_1)]+\Sigma[x(I_2)]$. 
Indeed, assume by contradiction that $f=\sigma_1+\sigma_2$ for some $\sigma_l\in\Sigma[x(I_l)]$, $l=1,2$. Evaluation at $x_3=0$, yields $f_1=\sigma_1+\sigma_2(x_2,0)$, so that $f_1$ is an SOS, which is impossible. 

However,  $(x_1^2+1)^2f_1$ is a sum of three squares of polynomials according to \cite[Example 2]{leep2001polynomials}, so $(x_1^2+1)^2f\in \Sigma[x(I_1)]_{5}+\Sigma[x(I_2)]_5$. 
Note that $D_1=D_2=x_1^2+1$. 
Thus, 
\[f\in \frac{\Sigma[x(I_1)]_{5}}{D_1^2}+\frac{\Sigma[x(I_2)]_5}{D_2^2} \subset \frac{\Sigma[x(I_1)]_{7}}{\Theta_1^2}+\frac{\Sigma[x(I_2)]_7}{\Theta_2^2}\,.\]
\end{example}
\begin{example}\label{exam:zero.parameter.epsilon.noncommutative}
As shown in  \cite[Example 5.2]{klep2019sparse}, the nonnegative polynomial
\[f=x_1^2-2x_1x_2+3x_2^2-2x_1^2x_2+2x_1^2x_2^2-2x_2x_3+6x_3^2+18x_2^2x_3-54x_2x_3^2+142x_2^2x_3^2\]
satisfies $f\in \R[x(I_1)]+\R[x(I_2)]$ and $f\notin \Sigma[x(I_1)]+\Sigma[x(I_2)]$, with $I_1=\{1,2\}$ and $I_2=\{2,3\}$. However, $f\in \frac{\Sigma[x(I_1)]_4}{\Theta_1}+\frac{\Sigma[x(I_2)]_4}{\Theta_2}$, where $\Theta_1=(x_2^2+1)(x_1^2+x_2^2+1)$ and $\Theta_2=(x_2^2+1)(x_2^2+x_3^2+1)$. It is due to the fact that $f= \frac{\sigma_1}{D_1}+\frac{\sigma_2}{D_2}$, where $D_1=D_2=x_2^2+1$ and $\sigma_1$ and $\sigma_2$ are SOS polynomials given in Appendix \ref{sec:Appendix}.
\end{example}
We next state our second main result, namely a \emph{sparse version} of Putinar-Vasilescu's Positivstellensatz for polynomials nonnegative on (possibly non-compact) basic semialgebraic sets.
\begin{corollary}\label{coro:rep.sum.positive.con}
Let $f\in\R[x]$ be nonnegative on $S(g)$ such that the conditions $(i)$, $(ii)$ and $(iii)$ of Assumption \ref{assum:sparse.conditions} hold.
Let $\varepsilon>0$ and $d\ge 1+\lfloor \deg(f)/2\rfloor$. 
Recall that $u_j=\lceil \deg(g_j)/2\rceil$, for all $j=1,\dots,m$.
Then there exist $k\in\N$,   $\sigma_{0,l}\in \Sigma[x( I_l)]_{d+k\omega_l}$ and $\sigma_{j,l}\in \Sigma[x( I_l)]_{d+k\omega_l-u_j}$, $j\in J_l$, $l=1,\dots,p$,  such that 
\begin{equation}\label{eq:dehomogeneous.con}
f+\varepsilon \sum\limits_{l = 1}^p {\theta_l^{d}}=\sum\limits_{l = 1}^p {\frac{{{\sigma_{0,l} }}+\sum_{j\in J_l}{{\sigma_{j,l} {g_j} }} }{\Theta_l^{k}}}\,.
\end{equation}
\end{corollary}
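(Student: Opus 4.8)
The plan is to derive Corollary~\ref{coro:rep.sum.positive.con} from a \emph{constrained, homogeneous} analogue of Theorem~\ref{theo:rep.sum.pd}, via the homogenization/dehomogenization scheme used for the unconstrained case (Corollary~\ref{coro:rep.sum.positive}), carrying the constraints $g_j$ along. First I would introduce a fresh variable $x_{n+1}$ and set $\tilde I_l:=I_l\cup\{n+1\}$ for $l=1,\dots,p$; adjoining the same variable to every block preserves the running intersection property, so $(\tilde I_l)_{l=1}^{p}$ still satisfies Assumption~\ref{assum:sparse.conditions}$(i)$, with the same indices $s_l$ and with $\tilde I_l\cap\bigcup_{j<l}\tilde I_j=\hat I_l\cup\{n+1\}$. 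For each $l$ let $\tilde f_l\in\R[x(\tilde I_l)]$ be the degree-$2d$ homogenization of $f_l$ in $x_{n+1}$ (legitimate since $\deg f_l\le\deg f\le 2d-1$ by $d\ge 1+\lfloor\deg f/2\rfloor$), and for $j\in J_l$ let $\tilde g_j\in\R[x(\tilde I_l)]$ be the degree-$2u_j$ homogenization of $g_j$. These are forms with $\tilde g_j(x,1)=g_j(x)$, and $\tilde f:=\sum_{l=1}^{p}\tilde f_l$ is a form of degree $2d$ with $\tilde f(x,1)=f(x)$ and $\tilde f(x,0)=0$ (every monomial of each $\tilde f_l$ carries a positive power of $x_{n+1}$). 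For $x_{n+1}=z\neq 0$ one has $\tilde g_j(x,z)\ge 0\iff g_j(x/z)\ge 0$ and $\tilde f(x,z)=z^{2d}f(x/z)$, so $\tilde f\ge 0$ on the sparse semialgebraic cone $S(\tilde g):=\{y\in\R^{n+1}:\tilde g_j(y)\ge 0,\ j=1,\dots,m\}$ (at $z=0$, $\tilde f=0$). Letting $\tilde\Phi_l$ be the polynomial $\Phi_l$ built from the family $(\tilde I_l)$, putting $\tilde H_l:=\|x(\tilde I_l)\|_2^2\,\tilde\Phi_l$, and setting
\[P_\varepsilon:=\tilde f+\varepsilon\sum_{l=1}^{p}\|x(\tilde I_l)\|_2^{2d}\,,\]
one obtains a form of degree $2d$ whose $l$-th summand lies in $\R[x(\tilde I_l)]$; since $\sum_{l=1}^{p}\|y(\tilde I_l)\|_2^{2d}>0$ for every $y\in\R^{n+1}\setminus\{0\}$ (each coordinate of $y$ belongs to some $\tilde I_l$), we get $P_\varepsilon>0$ on $S(\tilde g)\setminus\{0\}$.

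Next I would establish the following constrained counterpart of Theorem~\ref{theo:rep.sum.pd}: if a form $P=\sum_{l=1}^{p}P_l$ of degree $2d$ with $P_l\in\R[x(\tilde I_l)]$ is positive on $S(\tilde g)\setminus\{0\}$, then there exist $k\in\N$ and SOS \emph{forms} $\tilde\sigma_{0,l}$ of degree $2(d+k(1+\deg(\Phi_l)/2))$ and $\tilde\sigma_{j,l}$ of degree $2(d+k(1+\deg(\Phi_l)/2)-u_j)$, $j\in J_l$, $l=1,\dots,p$, each in the variables $x(\tilde I_l)$, such that
\[P=\sum_{l=1}^{p}\frac{\tilde\sigma_{0,l}+\sum_{j\in J_l}\tilde\sigma_{j,l}\,\tilde g_j}{\tilde H_l^{\,k}}\,.\]
I would prove this by re-running the proof of Theorem~\ref{theo:rep.sum.pd} with, for each block $l$, the constraint generators $\{\tilde g_j:j\in J_l\}$ adjoined to the finitely generated $\R$-algebra on the variables $x(\tilde I_l)$: the sparse Hilbert--Artin stage (obtained via an idea of Grimm et al.~\cite{grimm2007note}) is replaced by its constrained, Stengle-type counterpart~\cite{stengle1974nullstellensatz}, and the block-wise induction along the running intersection property invokes the Schm\"udgen-type Positivstellens\"atze of Schweighofer~\cite{schweighofer2003iterated}, Berr--W\"ormann~\cite{berr2001positive}, Jacobi~\cite{jacobi2001representation} and Marshall~\cite{marshall2001extending, marshall2002general} for this enlarged algebra, exactly where the unconstrained proof uses them for the algebra on $x(\tilde I_l)$ alone. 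The denominators come out to be $\tilde H_l^{\,k}$ precisely because $\tilde H_l$ dehomogenizes to $\Theta_l$ (see below). Applying this with $P=P_\varepsilon$ yields the displayed representation of $P_\varepsilon$.

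Finally I would dehomogenize by substituting $x_{n+1}=1$: then $\|x(\tilde I_l)\|_2^2\mapsto\theta_l$, $\|x(\hat I_j)\|_2^2+x_{n+1}^2\mapsto\hat\theta_j$, hence $\tilde\Phi_l\mapsto D_l$ and $\tilde H_l\mapsto\theta_l D_l=\Theta_l$; moreover $\|x(\tilde I_l)\|_2^{2d}\mapsto\theta_l^{\,d}$, $\tilde f\mapsto f$, $\tilde g_j\mapsto g_j$, and an SOS form of degree $2N$ maps to an element of $\Sigma[x(I_l)]_N$. Writing $\sigma_{0,l},\sigma_{j,l}$ for the images of $\tilde\sigma_{0,l},\tilde\sigma_{j,l}$ and using $\deg(\Phi_l)/2=\omega_l-1$, one gets $\sigma_{0,l}\in\Sigma[x(I_l)]_{d+k\omega_l}$ and $\sigma_{j,l}\in\Sigma[x(I_l)]_{d+k\omega_l-u_j}$, and since $P_\varepsilon(x,1)=f(x)+\varepsilon\sum_{l=1}^{p}\theta_l^{\,d}$, the representation of $P_\varepsilon$ becomes exactly \eqref{eq:dehomogeneous.con}.

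The step I expect to be the main obstacle is the constrained sparse Reznick certificate of the second paragraph: one must check that the proof of Theorem~\ref{theo:rep.sum.pd} still goes through when the generators $\tilde g_j$ are attached to each block, keeping the \emph{uniform} denominators $\tilde H_l$ and the degree bounds under control through the running-intersection induction, and that the Schm\"udgen-type theorems for finitely generated $\R$-algebras remain applicable on a non-compact cone (together with the limiting arguments needed for the non-compactness) — in particular on $\{x_{n+1}=0\}$, where the strict positivity of $P_\varepsilon$ is exactly what the $\varepsilon$-perturbation supplies. The remaining homogenization/dehomogenization bookkeeping is routine.
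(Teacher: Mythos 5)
Your scheme hinges entirely on the ``constrained counterpart of Theorem~\ref{theo:rep.sum.pd}'' in your second paragraph, and that is precisely where the argument has a genuine gap: you do not prove it, and the proposed proof by ``re-running'' the unconstrained proof with the generators $\tilde g_j$ adjoined does not go through. The first stage of that proof, the sparse splitting of Lemmas~\ref{lem:p.equal.2} and~\ref{lem:sparse.posi.form.general.case}, constructs $h(y)=\min_\xi\{f_1(\xi,y)-\tfrac{\varepsilon}{2}\|(\xi,y)\|_2^{2d}\}$ and this construction uses $f\ge\varepsilon\|x\|_2^{2d}$ on \emph{all} of $\R^{n+1}$ (genuine positive definiteness) to get coercivity, well-definedness and the two bounds in \eqref{eq:ineq.f1.f2}; if $P_\varepsilon$ is only positive on the cone $S(\tilde g)\setminus\{0\}$, the function $\xi\mapsto\psi(\xi,y)$ need not even be bounded below, and no analogue of the block-splitting is available (the constrained splitting of Grimm et al.\ relies on compactness of the feasible set, which is exactly what is missing here). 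Moreover, even for a single block, the algebraic machinery of Lemma~\ref{lem:positive.in.preordering} applied to a preordering containing the $\tilde g_j$ would deliver a Schm\"udgen/Stengle-type certificate involving products $\prod_{j\in J_l}\tilde g_j^{\alpha_j}$, not the representation linear in the $\tilde g_j$ that you assert (and that the corollary requires); passing from the preordering to the quadratic module needs a separate argument you do not supply. So the step you flag as ``the main obstacle'' is not bookkeeping to be checked: it is an unproved constrained sparse Positivstellensatz on a non-compact cone, and nothing in the paper provides it.

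The paper avoids any such constrained representation theorem by absorbing the constraints into the objective before homogenizing. With $\lambda_j:=(\|g_j\|_1+1)^{-1}$ one has $|\lambda_jg_j/\theta_l^{u_j}|<1$, and one sets
\[
Q_k:=f+\tfrac{\varepsilon}{2}\sum_{l=1}^p\theta_l^d-\sum_{l=1}^p\sum_{j\in J_l}\Bigl(1-\tfrac{\lambda_jg_j}{\theta_l^{u_j}}\Bigr)^{2k^2}\Bigl(\tfrac{\lambda_jg_j}{\theta_l^{u_j}}\Bigr)^{2k+1},
\]
which is shown to be nonnegative on $\R^n$ for large $k$ (coercivity of $f+\tfrac{\varepsilon}{2}\sum_l\theta_l^d$ outside a ball, pointwise limits inside: the subtracted terms blow up off $S(g)$ and vanish on it). Fixing such a $K$, homogenizing, and using H\"older's inequality, $\bar Q_K+\tfrac{\varepsilon}{2}\sum_l\|\bar x(\bar I_l)\|_2^{2d}$ is a positive definite form, so the \emph{unconstrained} Theorem~\ref{theo:rep.sum.pd} applies; dehomogenizing and writing the absorbed terms as $\sigma_{j,l}g_j/\theta_l^{r_j}$ with $\sigma_{j,l}:=(\theta_l^{u_j}-\lambda_jg_j)^{2K^2}(\lambda_jg_j)^{2K}$ (SOS because these are even powers), a final rescaling by powers of $\theta_lD_l$ puts everything over the common denominators $\Theta_l^{\tilde k}$ and yields \eqref{eq:dehomogeneous.con}. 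Your homogenization/dehomogenization bookkeeping at the end is consistent with this, but as it stands your proposal replaces the paper's key trick by an unproven theorem, so it cannot be accepted as a proof.
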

The proof of Corollary \ref{coro:rep.sum.positive.con} is postponed to Section \ref{proof:rep.sum.positive.con}.
\begin{example}\label{exam:sum.pos.on.sem.set}
Let $f=f_1+f_2$, where $f_1=x_1x_2$ and $f_2=x_2^2x_3$. Let $g=\{g_1,g_2,g_3\}$, where $g_1=x_2^3$, $g_2=-g_1$ and $g_3=x_3$. It is not hard to show that $f= 0$ on $S(g)$, so that $f\ge 0$ on $S(g)$. By noting $I_1:=\{1,2\}$ and $I_2:=\{2,3\}$, one has $\{f_1,g_1,g_2\}\subset \R[x(I_1)]$ and $\{f_2,g_3\}\subset \R[x(I_2)]$. We claim the following statements:
\begin{enumerate}
\item $f\notin \Sigma[x(I_1)]+g_1\R[x(I_1)]+\Sigma[x(I_2)]+g_3\Sigma[x(I_2)]$;
\item for every $\varepsilon>0$, 
\[f+\varepsilon (\theta_1^2+\theta_2^2)\in \frac{\Sigma[x(I_1)]_{2k+2}+g_1\R[x(I_1)]_{4k+1}}{\Theta_1^k}+\frac{\Sigma[x(I_2)]_{2k+2}+g_3\Sigma[x(I_2)]_{4k+3}}{\Theta_2^k}\,,\]
for some $k\in\N$ depending on $\varepsilon$.
\end{enumerate}
Proof of the first statement: 
Assume by contradiction that there exist $\sigma_1\in \Sigma[x(I_1)]$, $\psi_1\in \R[x(I_1)]$ and $\sigma_2, \sigma_3\in \Sigma[x(I_2)]$ such that $f=\sigma_1+\psi_1 g_1+\sigma_2+\sigma_3 g_3$. 
Evaluation at $x_1=1$ and $x_3=0$ yields
\[x_2=\sigma_1(1,x_2)+\psi_1(1,x_2) x_2^3+\sigma_2(x_2,0)\in \Sigma[x_2]+x_2^3\R[x_2]\,,\]
which is impossible due to \cite[Lemma 3.3 (i)]{mai2019sums}. 

\noindent
Proof of the second statement:
With $\varepsilon>0$ fixed,
\[f_1+\varepsilon \theta_1^2 =x_1x_2+\varepsilon(1+x_1^2+x_2^2)^2=x_1x_2+\varepsilon+\varepsilon x_1^2 +\sigma_4\,,\]
for some $\sigma_4\in\Sigma[x(I_1)]_2$. 
Let $k\in\N^{\ge 2}$ be fixed.
Then $D_1^k=(1+x_2^2)^k=1+kx_2^2+x_2^4\sigma_5$ for some $\sigma_5\in\Sigma[x_2]_{k-2}$, which
implies 
\[D_1^k(f_1+\varepsilon \theta_1^2)=x_1x_2+\varepsilon x_1^2 + \varepsilon k x_2^2+\sigma_6+\psi_2 x_2^3\,,\]
for some $\sigma_6\in\Sigma[x(I_1)]_{k+2}$ and $\psi_2\in \R[x(I_1)]_{2k+1}$. 
Assume that $k\ge \varepsilon^{-2}/4$. Then
\[\begin{array}{rl}
D_1^k(f_1+\varepsilon \theta_1^2)&=x_1^2\left(\varepsilon  - \frac{1}{{4\varepsilon k}}\right) + {\left( {x_2\sqrt {\varepsilon k }  + \frac{x_1}{{2\sqrt {\varepsilon k} }}} \right)^2}+\sigma_6+\psi_2 x_2^3\\
&\in \Sigma[x(I_1)]_{k+2}+g_1\R[x(I_1)]_{2k+1}\,,
\end{array}\]
which implies $f_1+\varepsilon \theta_1^2\in \frac{\Sigma[x(I_1)]_{2k+2}+g_1\R[x(I_1)]_{4k+1}}{\Theta_1^k}$. 
We also have 
\[f_2+\varepsilon \theta_2^2\in\frac{\Sigma[x(I_2)]_{2k+2}+g_3\Sigma[x(I_2)]_{4k+3}}{\Theta_2^k}\]
since $f_2\in g_3\Sigma[x(I_2)]_1$, proving the second statement.
\end{example}
\section{Preliminary material}
%
%
Given $\alpha = (\alpha_1,\dots,\alpha_n) \in \N^n$, we note $|\alpha| := \alpha_1 + \dots + \alpha_n$ and $x^\alpha:=x_1^{\alpha_1}\dots x_n^{\alpha_n}$.
Let $(x^\alpha)_{\alpha\in\N^n}$ be the canonical basis of monomials for  $\R[x]$ (ordered according to the graded lexicographic order) and 
$v_t(x)$ be the vector of monomials up to degree $t$, with length $s(t) = {\binom{n+t}{n}}$.
A polynomial $h\in\R[x]_t$ is written as  
$h(x)\,=\,\sum_{| \alpha | \leq t} h_\alpha\,x^\alpha\,=\,\mathbf{h}^Tv_d(x)$, 
where $\mathbf{h}=(h_\alpha)\in\R^{s(t)}$ is its vector of coefficients in the canonical basis.
Denote by $\S^{n-1} := \{x \in \R^n : \|x\|_2 = 1 \}$ the $(n-1)$-dimensional unit sphere.

A function $h$ is \emph{homogeneous} of degree $t$ if
$h(\lambda x)=\lambda^t h(x)$ for all $x\in\R^n$ and each $\lambda\in\R$. Therefore a homogeneous polynomial can be written as 
$h = \sum_{| \alpha | = t} {h_\alpha x^\alpha }$. A function $f:\R^n\to\R$ is even if $f(x)=f(-x)$ for all $x$.
A rational function $h$ is the  ratio of two polynomials and denote by $\R(x)$ the space of all rational functions. 
A homogeneous rational function (also called be a {\em rational form}, or form in
short) can be written as the ratio of two homogeneous polynomials.

The degree-$d$ {\em homogenization} $\tilde h$ of $h\in \R(x_1,\dots,x_n)$ is a  homogeneous rational function in $\R(x_1,\dots,x_{n+1})$  of degree $d$ defined by 
$\tilde h ( x,x_{n + 1}) = x_{n + 1}^{d} h ( x/x_{n + 1} )$.
A {\em rational positive definite form} of degree $t$ is a homogeneous rational function of degree $t$ which is positive everywhere except at the origin.
Equivalently, a homogeneous rational function $h$ of degree $t$ is a rational positive definite form of degree $t$ if and only if there exists $\varepsilon>0$ such that $h\ge \varepsilon \|x\|_2^{2t}$.

We briefly recall some algebraic tools from generalizations of Schm\"udgen's Positivstellensatz \cite{schweighofer2003iterated} which will be used in the sequel. 
An associative algebra $A$ is called {\em a finitely generated $\R$-algebra} if there exists a finite set of elements $a_1,\dots,a_n$ of $A$ such that every element of $A$ can be expressed as a polynomial in $a_1,\dots,a_n$, with coefficients in $\R$.
Let $A$ be a commutative ring.
We denote by $\Sigma A^2$ the set of all SOS of elements in $A$.
A subset $T$ of $A$ is called a {\em preordering} if $T$ contains all squares
and is closed under addition and multiplication. 
The preordering $T$ generated by elements $t_1,\dots,t_m$ (so-called smallest preordering containing $t_1,\dots,t_m$) consists of all elements of the form $\sum_{\alpha\in\{0,1\}^m}(\sigma_\alpha \prod_{j=1}^m t_j^{\alpha_j})$, with $\sigma_\alpha\in \Sigma A^2$. 
The real spectrum of a ring $A$ with fixed preordering $T$, denoted by $\Sper_T A$, is defined by
\[\Sper_T A:=\{\varphi\in \Hom(A,\R): \varphi (T)\subset \R_+\}\,,\]
where $\Hom(A,\R)$ is the set of all ring homomorphisms from $A$ to $\R$. 
Let $A$ be a ring with fixed preordering $T$.
We denote by $H(A)$ (resp. $H'(A)$) the ring of geometrically (resp. arithmetically) bounded elements in $A$, i.e.,
\[\begin{array}{rl}
&H(A):=\{ h\in A:\,\exists K\in \N:\,K\pm h\ge 0 \text{ on }\Sper_T A\}\\
&H'(A):=\{h\in A:\,\exists K\in \N:\,K\pm h \in T\}\,,
\end{array}\]
where ``$h\ge 0$ on $\Sper_T A$" means ``$\varphi (h)\ge 0$ for all $\varphi\in \Sper_T A$". 
From \cite[(1.1)]{schweighofer2003iterated}, 
\begin{equation}\label{eq:geometrically.implies.arithmetically}
A=H(A)\Rightarrow A=H'(A)\,.
\end{equation} 
Let us restate \cite[Theorem 1.3]{schweighofer2003iterated} as follows:
\begin{lemma}\label{lem:positive.in.preordering}
If $\Q \subset A$ and $A = H'(A)$, then for any $f\in A$,
\[f>0\text{ on }\Sper_T A \Rightarrow f\in T\,.\]
\end{lemma}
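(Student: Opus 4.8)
\emph{Proof plan.} This is an abstract Positivstellensatz of Kadison--Dubois / Becker--Schwartz type, in which the hypothesis $A=H'(A)$ plays the role that compactness of the feasible set plays in Schm\"udgen's theorem. I would proceed in two steps: a soft compactness argument reducing the strict inequality to a weaker ``representation'' statement, and then the representation statement itself, proved by a Zorn's-lemma extension-to-an-ordering argument.

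\emph{Step 1 (compactness and reduction).} First I would topologize $\Hom(A,\R)\subseteq\R^A$ with the weakest topology making every evaluation $\varphi\mapsto\varphi(a)$ continuous. Because $A=H'(A)$, each $a\in A$ has some $K_a\in\N$ with $K_a\pm a\in T$, so every $\varphi\in\Sper_T A$ satisfies $\varphi(a)\in[-K_a,K_a]$; thus $\Sper_T A$ lies in the compact product $\prod_{a\in A}[-K_a,K_a]$ and is closed there (being a ring homomorphism and mapping $T$ into $\R_{\ge0}$ are closed conditions), hence compact. If $\Sper_T A=\emptyset$ then $-1\in T$, so $T=A\ni f$; otherwise $\varphi\mapsto\varphi(f)$ is continuous and positive on the nonempty compact $\Sper_T A$, so $\delta:=\min_\varphi\varphi(f)>0$. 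Choosing a rational $c$ with $0<c<\delta$, it then suffices to show $f-c\in T$, since $f=(f-c)+c$; and $\varphi(f-c)\ge\delta-c>0$ for all $\varphi$.

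\emph{Step 2 (the representation theorem).} It remains to establish: if $g\in A$ and $\varphi(g)\ge0$ for all $\varphi\in\Sper_T A$, then $g+\epsilon\in T$ for every rational $\epsilon>0$ (apply with $g=f-c$, $\epsilon=c$). I would argue by contradiction. If $g+\epsilon\notin T$, Zorn's lemma provides a preordering $P\supseteq T$ maximal with respect to $g+\epsilon\notin P$. The standard real-algebra argument (using $\Q\subseteq A$) shows $P$ is an \emph{ordering}: $P\cup(-P)=A$ and $\mathfrak p:=P\cap(-P)$ is prime. Passing to the ordered domain $A/\mathfrak p$ and its fraction field $k$, the order on $k$ is Archimedean over $\Q$ (for each $a$, $K_a\pm a\in T\subseteq P$ bounds $\overline a$), so $k$ admits an order-preserving embedding into $\R$; composing $A\to A/\mathfrak p\hookrightarrow k\hookrightarrow\R$ yields $\varphi\in\Sper_T A$. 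Since $g+\epsilon\notin P$ and $P\cup(-P)=A$, we have $-(g+\epsilon)\in P$, so $\varphi(g)=\varphi(g+\epsilon)-\epsilon\le-\epsilon<0$, contradicting $\varphi(g)\ge0$.

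\emph{Where the difficulty is.} Step 1 is routine. The substantive content is entirely in Step 2, namely the two classical ingredients: that a preordering maximal with respect to avoiding an element is automatically an ordering with prime support (a separation argument, where $\Q\subseteq A$ is used), and that an Archimedean ordered field order-embeds into $\R$ --- this last point is exactly where $A=H'(A)$ is indispensable, for without it the ordering $P$ could sit in a non-Archimedean real closed field and give no genuine point of $\Sper_T A$. These are the standard steps of the Kadison--Dubois representation theorem and constitute the only non-formal part of the argument.
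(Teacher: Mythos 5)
You should first note that the paper contains no proof of this lemma: it is stated as a verbatim restatement of \cite[Theorem 1.3]{schweighofer2003iterated}, i.e., a Kadison--Dubois/Jacobi-type representation theorem quoted as a black box. So your proposal is measured against the classical proof of that cited result, and your outline (compactness of $\Sper_T A$, reduction to a rational margin, Zorn's lemma applied to objects extending $T$, and an order-theoretic construction of a point of $\Sper_T A$) is indeed the standard skeleton. Step 1 is fine, apart from the aside that ``$\Sper_T A=\emptyset\Rightarrow -1\in T$'' is itself an instance of the representation theorem rather than an observation (harmless, since your Step 2 produces the contradiction directly even when $\Sper_T A$ is empty).

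The genuine gaps are exactly the two points you label standard. First, a preordering $P\supseteq T$ maximal subject to $g+\epsilon\notin P$ need \emph{not} be an ordering, and $\Q\subseteq A$ alone cannot give this: in $A=\R[x,y]$ with $T=\Sigma A^2$, the Motzkin polynomial is nonnegative at every point of the real spectrum, hence lies in every ordering of $A$, yet it is not in $T$; a preordering maximal with respect to excluding it therefore cannot be an ordering (it cannot satisfy $P\cup(-P)=A$ with prime support). So the hypothesis $A=H'(A)$ is indispensable already for the totality/primality step---this is where the arithmetic bounds $K_a\pm a\in T\subseteq P$ are used in the Becker--Schwartz/Jacobi/Marshall lemmas behind Schweighofer's theorem---and your ``where the difficulty is'' paragraph mislocates it. Second, the chain $A\to A/\mathfrak{p}\hookrightarrow k\hookrightarrow\R$ does not work as written: $K_a\pm a\in P$ only bounds the images of elements of $A$ by integers, which does not exclude nonzero infinitesimals in $A/\mathfrak{p}$, and the inverse of such an infinitesimal makes the fraction field $k$ non-archimedean, hence not order-embeddable in $\R$. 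The classical repair bypasses the fraction field: one defines $\varphi(a):=\inf\{q\in\Q:\,q-a\in P\}$ directly on $A$ and proves, using the maximality of $P$, the preordering structure and the boundedness hypothesis, that $\varphi$ is a ring homomorphism with $\varphi(T)\subseteq\R_+$ and $\varphi(g+\epsilon)\le 0$; its kernel may be strictly larger than $\mathfrak{p}$. With these two steps carried out properly your outline becomes the standard proof of the cited theorem, but as written they are precisely the places where the argument would fail.
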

Let us note $\|h\|_1:=\sum\nolimits_\alpha  {{|h_\alpha |}} $ for a given $h \in \R[x]$. 
We start with two preliminary results.
\begin{lemma}\label{lem:continuity.homogeneous}
For $k \in \N$ and $d\in\N^{> 0}$, let $q$ be a form of degree $2(d+k)$ and $f=\frac{q}{\|x\|_2^{2k}}\in\R( x)$. 
Then  $f$ is continuous and homogeneous of degree $2d$.
\end{lemma}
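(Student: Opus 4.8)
The plan is to prove the two assertions separately; the only genuine subtlety is the behaviour of $f$ at the origin, where the denominator $\|x\|_2^{2k}$ vanishes (when $k=0$ there is nothing to do, since then $f=q$ is itself a homogeneous polynomial of degree $2d$).

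First I would record a uniform bound on the numerator. Since $q$ is homogeneous of degree $2(d+k)$ and $\S^{n-1}$ is compact, the constant $C:=\max_{y\in \S^{n-1}}\abs{q(y)}$ is finite, and homogeneity gives $\abs{q(x)}=\|x\|_2^{2(d+k)}\,\abs{q(x/\|x\|_2)}\le C\,\|x\|_2^{2(d+k)}$ for every $x\ne 0$. Dividing by $\|x\|_2^{2k}$ yields $\abs{f(x)}\le C\,\|x\|_2^{2d}$ on $\R^n\setminus\{0\}$. Because $d>0$, the right-hand side tends to $0$ as $x\to 0$, so setting $f(0):=0$ makes $f$ continuous at the origin; away from the origin $f$ is a quotient of polynomials with non-vanishing denominator, hence continuous there as well. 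This establishes continuity of $f$ on all of $\R^n$.

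For homogeneity I would simply compute: for $x\ne 0$ and $\lambda\ne 0$,
\[f(\lambda x)=\frac{q(\lambda x)}{\|\lambda x\|_2^{2k}}=\frac{\lambda^{2(d+k)}q(x)}{\lambda^{2k}\|x\|_2^{2k}}=\lambda^{2d}f(x),\]
using $q(\lambda x)=\lambda^{2(d+k)}q(x)$ and $\|\lambda x\|_2^2=\lambda^2\|x\|_2^2$. The remaining cases $\lambda=0$ or $x=0$ are immediate from $f(0)=0$ together with $0^{2d}=0$ (valid since $d>0$); hence $f(\lambda x)=\lambda^{2d}f(x)$ for all $\lambda\in\R$ and all $x\in\R^n$, i.e. $f$ is homogeneous of degree $2d$.

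I do not expect any real obstacle here: the argument is elementary, and the one place that needs a moment's care — that $f$ extends continuously across the origin — is handled entirely by the estimate $\abs{f(x)}\le C\,\|x\|_2^{2d}$ and the hypothesis $d\ge 1$ (the statement genuinely uses $d>0$: for instance $x_1^{2k}/\|x\|_2^{2k}$ is not continuous at $0$).
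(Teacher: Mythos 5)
Your proof is correct and follows essentially the same route as the paper: bound $|f(x)|\le C\|x\|_2^{2d}$ (the paper takes $C=\|q\|_1$ instead of the maximum of $|q|$ on the sphere, an immaterial difference) and use $d>0$ to get continuity at the origin, with homogeneity by direct computation. No gaps.
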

\begin{proof}
The rational function $f$ is obviously homogeneous of degree $2d$. 
To show that $f$ is continuous, it is sufficient to prove that $f$ is continuous at zero. 
Let $y\in \S^{n-1}$, then one has $| y^{\alpha} | \leq 1$, for all $\alpha$ such that $| \alpha | = 2 (d+k)$. Thus,
\[\left|{q( y)}\right|=| \sum_{\alpha}{q_{\alpha}y^{\alpha}}| \le  \sum_{\alpha}{|q_{\alpha}| |y^{\alpha}|}\le  \sum_{\alpha}{|q_{\alpha}|}=\|q\|_1\,.\]
From this, one has $|f(y)|=|{{q(y)}}|\le {\|q\|_1}$.
Let $x \neq 0$. Since $f$ is homogeneous of degree $2d$, 
\[ \frac{|f(x)|}{ {\| x\|_2^{2d}} }     = \left| f\left(\frac{x}{\|x \|}   \right) \right| \le {\|q\|_1} \,.\]
Hence for all $x \neq 0$, $|f(x)| \leq {\|q\|_1} \| x\|_2^{2d}$, thus  $\lim_{ x\to 0}{f(x)}=0$,  yielding the conclusion.
\end{proof}
\begin{lemma}\label{lem:approx.form.on.sphere}
Let $h:\R^n\to \R$ be an even function such that $h$ is continuous on $\mathbb{S}^{n-1}$. 
Then there exists a sequence $(q_k)_{k \in \N}$ of homogeneous polynomials, with $deg(q_k)=2k$ for all $k \in \N$,  converging uniformly to $h$ on $\mathbb{S}^{n-1}$.
\end{lemma}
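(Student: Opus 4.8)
The plan is to reduce the claim to the classical Stone–Weierstrass theorem applied on the sphere $\mathbb{S}^{n-1}$, using the fact that homogeneous polynomials of degree $2k$, once restricted to $\mathbb{S}^{n-1}$, generate a dense subalgebra of the even continuous functions. First I would observe that on the unit sphere a monomial $x^\alpha$ with $|\alpha|=2k$ is the restriction of the polynomial $x^\alpha\cdot(\|x\|_2^2)^{0}$, but more usefully that $x^\alpha \|x\|_2^{2j}$ and $x^\alpha$ agree on $\mathbb{S}^{n-1}$ for every $j$; hence the restriction to $\mathbb{S}^{n-1}$ of the space of homogeneous polynomials of degree $2k$ equals the restriction of all polynomials that are sums of monomials of even degree $\le 2k$ — in other words, the restriction of the even polynomials of degree $\le 2k$. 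Letting $k\to\infty$, the union over $k$ of these spaces is exactly the restriction to $\mathbb{S}^{n-1}$ of the algebra $E$ of all even polynomials on $\R^n$.

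Next I would check that $E|_{\mathbb{S}^{n-1}}$ satisfies the hypotheses of Stone–Weierstrass as a subalgebra of $C(\mathbb{S}^{n-1})$: it is a subalgebra (closed under sums and products, since a product of even polynomials is even), it contains the constants (take $\alpha=0$, i.e. the even polynomial $1$), and it separates points of $\mathbb{S}^{n-1}$ \emph{up to the antipodal identification} — indeed, the even quadratic forms $x_ix_j$ already separate $x$ from $y$ whenever $y\neq\pm x$. Because $h$ is even, it descends to a well-defined continuous function on the quotient $\mathbb{S}^{n-1}/\{\pm 1\}=\mathbb{RP}^{n-1}$, and there the image of $E$ is a point-separating unital subalgebra of $C(\mathbb{RP}^{n-1})$. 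Stone–Weierstrass on the compact Hausdorff space $\mathbb{RP}^{n-1}$ then gives, for each $\varepsilon>0$, an even polynomial $p$ with $\|h-p\|_{\infty,\mathbb{S}^{n-1}}<\varepsilon$.

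Finally I would upgrade such an approximation to the precise form requested, namely a \emph{single homogeneous} polynomial of degree \emph{exactly} $2k$ for each $k$. Given $\varepsilon=1/(k+1)$, pick an even polynomial $p$ of degree $\le 2N$ (some $N$) with $\|h-p\|_{\infty,\mathbb{S}^{n-1}}<1/(k+1)$; then for any $k\ge N$, multiplying each monomial $x^\alpha$ of $p$ (of even degree $2j\le 2N\le 2k$) by $\|x\|_2^{2(k-j)}$ produces a homogeneous polynomial $q_k$ of degree exactly $2k$ that agrees with $p$ on $\mathbb{S}^{n-1}$, hence $\|h-q_k\|_{\infty,\mathbb{S}^{n-1}}<1/(k+1)$. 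For the finitely many small indices $k<N$ one may take $q_k$ to be any degree-$2k$ form (e.g. $\|x\|_2^{2k}\cdot h(e_1)$ or simply $0$), since only the tail behaviour of the sequence matters for uniform convergence. This yields a sequence $(q_k)_{k\in\N}$ of homogeneous polynomials with $\deg(q_k)=2k$ converging uniformly to $h$ on $\mathbb{S}^{n-1}$.

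The main obstacle, and the only genuinely delicate point, is the point-separation hypothesis in Stone–Weierstrass: the even polynomials do \emph{not} separate antipodal points, so one cannot apply the theorem naively on $\mathbb{S}^{n-1}$. The clean fix is to pass to the quotient $\mathbb{RP}^{n-1}$ (where evenness of $h$ is exactly what makes $h$ descend), and to note that the even quadratics $x_ix_j$ do separate points there; everything else is bookkeeping to convert an arbitrary even polynomial approximant into a genuine form of the prescribed degree by homogenizing with powers of $\|x\|_2^2$.
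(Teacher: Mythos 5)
Your proof is correct, but it follows a genuinely different route from the paper. The paper disposes of this lemma in two lines by invoking Varj\'u's theorem on approximation by homogeneous polynomials on the boundary of a convex domain (Theorem 1.4(b) together with Proposition 1.2(iii) of the cited reference), checking only that $\mathbb{S}^{n-1}$ bounds a convex body, is $C^2$, and has positive Gaussian curvature; that machinery does not need evenness of $h$ and works for more general convex surfaces. You instead exploit evenness directly: restrictions to $\mathbb{S}^{n-1}$ of degree-$2k$ forms coincide with restrictions of even polynomials of degree at most $2k$ (multiply each monomial $x^\alpha$, $|\alpha|=2j\le 2k$, by $\|x\|_2^{2(k-j)}$), the even polynomials descend to a unital point-separating subalgebra of $C(\mathbb{RP}^{n-1})$ --- your observation that the quadratics $x_ix_j$ separate points of the quotient is the key and is correct --- and Stone--Weierstrass finishes the density argument; the evenness hypothesis is exactly what lets $h$ descend, so you correctly identify the antipodal issue as the delicate point. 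Your approach buys an elementary, self-contained proof needing nothing beyond Stone--Weierstrass, at the cost of using the special structure of the sphere (the identity $\|x\|_2^2=1$) which the paper's citation avoids; the paper's approach buys brevity and a statement valid without evenness, but rests on a much deeper external result. One small piece of bookkeeping in your write-up deserves tightening: to produce a single sequence $(q_k)_{k\in\N}$ with $\deg q_k=2k$ converging uniformly, it is cleanest to note that the spaces $\{q|_{\mathbb{S}^{n-1}}: q \text{ a form of degree } 2k\}$ are nested (multiplication by $\|x\|_2^2$) with dense union, and to choose $q_k$ nearly attaining the distance from $h$ to the $k$-th space; your ``filler for small $k$'' device achieves the same thing and is acceptable, just slightly informal as written.
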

\begin{proof}
We rely on \cite[Theorem 1.4 (b)]{varju2007approximation} with $K=\mathbb{S}^{n-1}$ and the statement $(iii)$ in \cite[Proposition 1.2]{varju2007approximation}.
We first need to ensure that $\mathbb{S}^{n-1}$ is the boundary of a convex domain, which is obvious since it is the boundary of the unit ball. 
Then we use the fact that $\mathbb{S}^{n-1}$ is twice continuously differentiable and has Gaussian curvature $1$ at every point.
\end{proof}
%
\begin{lemma}\label{lem:p.equal.2}
Assume that $I=I_1\cup I_2$. Let $f\in\R(x)$ be a rational positive definite form of degree $2d$ with $d\in\N^{>0}$ such that $f=f_1+f_2$ with $f_1\in\R(x(I_1))$ and $f_2\in\R(x(I_2))$ being continuous and  homogeneous of degree $2d$. 
Then there exists a continuous rational function $\varphi\in\R(x(I_1\cap I_2))$ defined by
\[\varphi(y)= \frac{q(y)}{\|y\|_2^{2k}}\,,\, \forall y\in \R^{|I_1\cap I_2|}\,,\]
where $q\in \R[x(I_1\cap I_2)]$ is a form of degree $2(d+k)$ for some $k\in\N$ (only depending on $d$, $\varepsilon$ and $f_1$) such that 
\[f= h_1+h_2\,,\]
 where $h_1:=f_1-\varphi \in\R(x(I_1))$ and $h_2:=f_2+\varphi\in\R(x(I_2))$ are continuous rational positive definite forms of degree $2d$.
\end{lemma}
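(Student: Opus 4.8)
The plan is to construct the correction term $\varphi$ so that it splits $f$ into two genuinely positive definite pieces, one in each group of variables, by repairing the only place where positivity can fail — the "overlap" coordinate subspace where variables outside $I_1\cap I_2$ are set to zero. First I would quantify the positive definiteness of $f$: since $f$ is a rational positive definite form of degree $2d$, there is $\varepsilon>0$ with $f\ge \varepsilon\|x\|_2^{2d}$ on $\R^n$. Restricting to $\mathbb S^{n-1}$, for any unit vector $x$ one has $f_1(x)+f_2(x)\ge\varepsilon$. The obstruction is that $f_1$ alone need not be positive (and similarly $f_2$), because mass can be shifted between the two summands; we want to shift it back. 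The natural amount to shift is governed by the behaviour of $f_1$ on the slice $x(I_1\setminus I_2)=0$, i.e.\ as a function of the overlap variables $y=x(I_1\cap I_2)$ only.

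The key steps, in order. First, define on the overlap sphere $\mathbb S^{|I_1\cap I_2|-1}$ the continuous function $m(y):=\min\{\,t\ :\ t\ge f_1(x)\ \text{whenever } x\in\mathbb S^{n-1},\ x(I_1\cap I_2)=\lambda y \text{ for some }\lambda\}$ — more concretely, a continuous function $\psi$ on the overlap sphere chosen so that $f_1-\widetilde\psi$ is strictly positive on the part of $\mathbb S^{n-1}$ lying over each $y$, and simultaneously $f_2+\widetilde\psi$ is strictly positive there; such a $\psi$ exists because $f_1+f_2\ge\varepsilon$ forces the two "deficits" to overlap with room to spare, and a compactness/partition-of-unity argument produces a continuous choice. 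Second, extend $\psi$ to an even continuous function on all of $\R^{|I_1\cap I_2|}$ that is homogeneous of degree $2d$, and apply Lemma~\ref{lem:approx.form.on.sphere} to get homogeneous polynomials $q_k$ of degree $2k$ approximating $\psi\cdot\|y\|_2^{2d}$ (which is even and continuous on the overlap sphere) uniformly; set $\varphi:=q_k/\|y\|_2^{2k}$ for $k$ large. By Lemma~\ref{lem:continuity.homogeneous}, $\varphi$ is continuous, homogeneous of degree $2d$, and a rational function in $x(I_1\cap I_2)$ of the required form $q/\|y\|_2^{2k}$. Third, verify that for $k$ large enough the uniform approximation error is smaller than the strict-positivity margins secured in the first step, so that $h_1:=f_1-\varphi$ and $h_2:=f_2+\varphi$ are each $\ge\varepsilon'\|x\|_2^{2d}$ on the appropriate sphere for some $\varepsilon'>0$; since $h_1\in\R(x(I_1))$ and $h_2\in\R(x(I_2))$ are homogeneous of degree $2d$, this makes them rational positive definite forms, and $h_1+h_2=f$ by construction.

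The main obstacle is the first step: producing a \emph{single continuous} function $\psi$ on the overlap sphere that simultaneously dominates the $f_1$-deficit from below and the $f_2$-deficit from above, uniformly in the fibre variables. The point is that for fixed overlap direction $y$, as the remaining coordinates vary over the fibre of $\mathbb S^{n-1}$ above $y$, the value $f_1(x)$ ranges over an interval, and one must pick $\psi(y)$ strictly above the supremum of $-$(that interval measured against what $f_2$ can absorb) — the inequality $f_1+f_2\ge\varepsilon$ guarantees a nonempty open band of admissible values $\psi(y)$ for each $y$, and continuity/homogeneity of $f_1,f_2$ plus compactness of the fibres gives that this band varies lower-semicontinuously, so a continuous selection (e.g.\ the midpoint of a slightly shrunk band, or a mollified infimum) exists. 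Once $\psi$ is in hand, everything else is the routine machinery of Lemmas~\ref{lem:continuity.homogeneous} and~\ref{lem:approx.form.on.sphere} together with a margin-versus-error estimate, and the dependence of $k$ on $d$, $\varepsilon$, and $f_1$ is exactly the dependence of the approximation degree on $\psi$, which in turn depends only on those data.
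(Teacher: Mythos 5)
Your overall strategy is the same as the paper's: build a continuous, even, degree-$2d$ homogeneous ``transfer'' function of the overlap variables that restores a positivity margin for both $f_1-\varphi$ and $f_2+\varphi$, then approximate it on $\S^{|I_1\cap I_2|-1}$ by a homogeneous polynomial (Lemma \ref{lem:approx.form.on.sphere}), divide by a power of $\|x(I_1\cap I_2)\|_2$ and invoke Lemma \ref{lem:continuity.homogeneous}, and absorb the uniform error into the margin. The gap sits exactly at the step you flag as the main obstacle: the existence of a \emph{continuous} choice $\psi$ inside the admissible band. The band's lower endpoint $L(y)=\sup_z\{\varepsilon'\|(y,z)\|_2^{2d}-f_2(y,z)\}$ is a supremum of continuous functions, hence in general only lower semicontinuous, while its upper endpoint $U(y)=\inf_\xi\{f_1(\xi,y)-\varepsilon'\|(\xi,y)\|_2^{2d}\}$ is an infimum, hence only upper semicontinuous. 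This is the \emph{wrong} orientation for Michael-type selection or Kat\v{e}tov--Tong insertion, which need the lower bound usc and the upper bound lsc; with $L$ lsc, $U$ usc and $L\le U$, a continuous insertion can fail in general (take $L=\one_{(0,\infty)}$, $U=\one_{[0,\infty)}$ on $\R$). So ``the band varies lower-semicontinuously, hence a continuous selection exists'' is not a valid inference, and neither the midpoint of a shrunk band nor a mollified infimum is justified before one knows the endpoints themselves are continuous.

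What closes the gap --- and is precisely the paper's proof --- is to take the top of the band itself, $h(y):=\min_\xi\{f_1(\xi,y)-\tfrac{\varepsilon}{2}\|(\xi,y)\|_2^{2d}\}$, and prove directly: (i) the minimum exists because the minimand is coercive in $\xi$ (evaluate $f\ge\varepsilon\|x\|_2^{2d}$ at $(\xi,y,0)$ to get the lower bound $\tfrac{\varepsilon}{2}\|\xi\|_2^{2d}-f_2(y,0)$); (ii) $h$ is homogeneous of degree $2d$ (rescale the minimizing variable); (iii) $h$ is continuous, by comparing values at respective minimizers, $|h(y_1)-h(y_2)|\le\max_i|\psi(\xi_i,y_1)-\psi(\xi_i,y_2)|$, plus uniform continuity of $\psi$ on compact sets; and (iv) $f_2+h\ge\tfrac{\varepsilon}{2}\|x(I_2)\|_2^{2d}$, by evaluating $f\ge\varepsilon\|x\|_2^{2d}$ at $(\xi_{\min},y,z)$, while $f_1-h\ge\tfrac{\varepsilon}{2}\|x(I_1)\|_2^{2d}$ holds by definition. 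Your fibre-compactness remark can be upgraded to (i)--(iii), but note the fibrewise minimand $(f_1(\xi,\lambda y)-\varepsilon')/\lambda^{2d}$ blows up as $\lambda\to0$, so it is not continuous on the whole compact fibre and a bare ``partial minimum over a compact set is continuous'' does not apply without the uniform coercivity estimate. Finally, a small bookkeeping slip: with $q_k$ homogeneous of degree $2k$ you must set $\varphi=q_k/\|y\|_2^{2(k-d)}$, not $q_k/\|y\|_2^{2k}$, for $\varphi$ to be homogeneous of degree $2d$ and of the form $q/\|y\|_2^{2k'}$ with $\deg q=2(d+k')$ required by the statement.
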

\begin{proof} 
Since $f\in\R(x)$ is a rational positive definite form of degree $2d$,  there exists $\varepsilon >0$ such that
\begin{equation}\label{eq:positive.form.pro1} 
f\ge \varepsilon\|x\|_2^{2d}\text{ on }\R^n \,.
\end{equation}
Let us define the function $h:\R^{|I_1\cap I_2|}\to \R$ by
\begin{equation}\label{eq:def.h}
h(y):=\min\{\psi(\xi,y):\, \xi\in \R^{|I_1\backslash I_2|}\}\,,
\end{equation} 
where $\psi(\xi,y):=f_1(\xi,y) -\frac{\varepsilon}2 \|(\xi,y)\|_2^{2d}$.
To show that $h$ is well-defined, it is sufficient to prove that $\xi\mapsto \psi(\xi,y)$ is coercive on $\R^{|I_1\backslash I_2|}$ with fixed $y\in\R^{|I_1\cap I_2|}$. 
Indeed, for all $\xi\in \R^{|I_1\backslash I_2|}$, by \eqref{eq:positive.form.pro1},
\[\frac \varepsilon 2 \|\xi\|_2^{2d}\le \frac \varepsilon 2 \|(\xi,y)\|_2^{2d} \le f(\xi,y,0)-\frac \varepsilon 2 \|(\xi,y)\|_2^{2d}=f_1(\xi,y)-\frac \varepsilon 2 \|(\xi,y)\|_2^{2d}+f_2(y,0)\,,\]
so $\psi(\xi,y)\ge \frac \varepsilon 2 \|\xi\|_2^{2d} - f_2(y,0)$.
Moreover, $h$ is homogeneous of degree $2d$. 
Indeed, for every $t\in\R \backslash \{0\}$, one has
\[\begin{array}{rl}
h(ty)=&\min\{f_1(\xi,ty)-\frac{\varepsilon}2 \|(\xi,ty)\|_2^{2d}:\, \xi\in \R^{|I_1\backslash I_2|}\}\\
=&t^{2d}\min\{f_1(\xi/t,y)-\frac{\varepsilon}2 \|(\xi/t,y)\|_2^{2d}:\, \xi\in \R^{|I_1\backslash I_2|}\}\\
=&t^{2d}\min\{f_1(\xi,y)-\frac{\varepsilon}2 \|(\xi,y)\|_2^{2d}:\, \xi\in \R^{|I_1\backslash I_2|}\}\\
=&t^{2d} h(y)\,.
\end{array}\]
To show that $h$ is continuous, let $y_1,y_2\in\R^{I_1\cap I_2}$. We choose $\xi_1,\xi_2\in \R^{|I_1\backslash I_2|}$ minimizing $\xi\mapsto \psi(\xi,y_1)$ and $\xi\mapsto \psi(\xi,y_2)$, respectively. Then\[\psi(\xi_1,y_1)-\psi(\xi_1,y_2)\le\psi(\xi_1,y_1)-\psi(\xi_2,y_2)\le \psi(\xi_2,y_1)-\psi(\xi_2,y_2)\,.\]
From this and by \eqref{eq:def.h},
\[\begin{array}{rl}
|h(y_1)-h(y_2)|=&|\psi(\xi_1,y_1)-\psi(\xi_2,y_2)|\\
\le &\max\{|\psi(\xi_1,y_1)-\psi(\xi_1,y_2)|, |\psi(\xi_2,y_1)-\psi(\xi_2,y_2)|\}
\,.
\end{array}\]
This shows that $h$ is uniformly continuous on every compact subset of $\R^{|I_1\cap I_2|}$ because $\psi$ is uniformly continuous on 
every compact subset of $\R^{|I_1|}$.  Next, we claim that 
\begin{equation}\label{eq:ineq.f1.f2}
f_1-h \ge \frac{\varepsilon}2 \|x(I_1)\|_2^{2d}  \text{ on } \R^{|I_1|}\ \text{ and }\ f_2+h \ge \frac{\varepsilon}2 \|x(I_2)\|_2^{2d}  \text{ on } \R^{|I_2|}\,.
\end{equation} 
The first claim is clear by the definition of $h$.
 To prove the second one, let $(y,z)\in\R^{|I_2|}=\R^{|I_1\cap I_2|}\times \R^{|I_2\backslash I_1|}$, and choose $\xi\in\R^{|I_1\backslash I_2|}$ such that $h(y)=f_1(\xi,y)-\frac{\varepsilon}2 \|(\xi,y)\|_2^{2d}$. 
By \eqref{eq:positive.form.pro1}, observe that
\[\begin{array}{rl}
f_2(y,z)+h(y)=&f_2(y,z)+f_1(\xi,y)-\frac{\varepsilon}2 \|(\xi,y)\|_2^{2d}=f(\xi,y,z)-\frac{\varepsilon}2 \|(\xi,y)\|_2^{2d} \\
\ge& {\varepsilon} \|(\xi,y,z)\|_2^{2d}-\frac{\varepsilon}2 \|(\xi,y)\|_2^{2d}\ge \frac{\varepsilon}2 \|(y,z)\|_2^{2d}\,.
\end{array}\]
Next, we will approximate $h$ by a form of even degree on $\S^{|I_1\cap I_2|-1}$. Note that $h$ is continuous and even since $h$ is homogeneous of even degree. From this and by using Lemma \ref{lem:approx.form.on.sphere}, there exists $q\in\R[x(I_1\cap I_2)]$ homogeneous  of degree $2K$ for some $K\ge d$ such that 
\[|q-h|\le \frac \varepsilon 4\text{ on }\S^{|I_1\cap I_2|-1}\,.\] 
Since $1=\|x(I_1\cap I_2)\|_2^2$ on $\S^{|I_1\cap I_2|-1}$, 
\[\left|\frac{q}{\|x(I_1\cap I_2)\|_2^{2(K-d)}}-h\right|\le \frac \varepsilon 4\text{ on }\S^{|I_1\cap I_2|-1}\,.\] 
From this and since $\frac{q}{\|x(I_1\cap I_2)\|_2^{2(K-d)}}-h$ is homogeneous of degree $2d$, one has
\begin{equation}\label{ineq:important}
\left|\frac{q}{\|x(I_1\cap I_2)\|_2^{2(K-d)}}-h\right|\le \frac \varepsilon 4 \|x(I_1\cap I_2)\|_2^{2d}\text{ on }\R^{|I_1\cap I_2|}\backslash \{0\}\,.
\end{equation} 
By setting $\varphi:=\frac{q}{\|x(I_1\cap I_2)\|_2^{2(K-d)}}$ and using Lemma \ref{lem:continuity.homogeneous}, $\varphi$ is continuous on $\R^{|I_1\cap I_2|}$.
By setting $h_1:=f_1 - \varphi \in \R(x(I_1))$ and $ h_2:=f_2 + \varphi \in \R(x(I_2))$, one has 
$f=h_1+h_2$. 
Let us prove that $h_1$ and $h_2$ are both  rational positive definite forms of degree $2d$.
Indeed, by \eqref{eq:ineq.f1.f2} and \eqref{ineq:important},
\[\begin{array}{rl}
h_1=(f_1 - h)+ \left( h-\varphi\right)\ge  \frac{\varepsilon}2 \|x(I_1)\|_2^{2d} - \frac \varepsilon 4 \|x(I_1\cap I_2)\|_2^{2d}\ge \frac{\varepsilon}4 \|x(I_1)\|_2^{2d}\,,\\
h_2=(f_2 + h)+ \left(\varphi-h \right)\ge  \frac{\varepsilon}2 \|x(I_2)\|_2^{2d} - \frac \varepsilon 4 \|x(I_1\cap I_2)\|_2^{2d}\ge \frac{\varepsilon}4 \|x(I_2)\|_2^{2d}\,.
\end{array}\]
Thus, $ h_l \ge \frac{\varepsilon}4 \|x(I_l)\|_2^{2d}$ on $ \R^{|I_l|}$, $l=1,2$.
By setting $k:=K-d$, the conclusion follows.
 \end{proof}
 
Building up on Lemma \ref{lem:p.equal.2}, the following helpful result provides a non-compact analogue of Grimm et al. \cite{grimm2007note} and as expected, the non-compact case is much more involved.
\begin{lemma}\label{lem:sparse.posi.form.general.case}
Let  Assumption \ref{assum:sparse.conditions} $(i)$ holds.
Let $f\in\R(x)$ be a rational positive definite form of degree $2d$ such that $f=\sum_{l=1}^p f_l$ with $f_l\in\R(x(I_l))$ being continuous and  homogeneous of degree $2d$, $l=1,\dots,p$.
Then there exist continuous rational functions $\varphi_l\in\R(x(\hat I_l))$, $l=2,\dots,p$, defined by
\[\varphi_l(y)= \frac{q_l(y)}{\|y\|_2^{2k_l}}\,,\, \forall y\in \R^{\hat n_l}\,,\,l=2,\dots,p\,,\]
where $q_l\in \R[x(\hat I_l)]$ is homogeneous of degree $2(d+k_l)$ for some $k_l\in\N$, $l=2,\dots,p$, such that 
\[ f=\sum\limits_{l = 1}^p h_l\,,\]
 where $h_l:=f_l+\varphi_l-\sum_{j = l+1}^{p} \delta_{l,s_j}\varphi_j \in\R(x(I_l))$, with $\varphi_1:=0$, is a continuous rational positive definite forms of degree $2d$, for each $l=1,\dots,p$
\end{lemma}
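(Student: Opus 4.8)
The plan is to prove Lemma~\ref{lem:sparse.posi.form.general.case} by induction on $p$, using Lemma~\ref{lem:p.equal.2} as the base case and, more importantly, as the engine of the inductive step. The running intersection property is what makes this work: it guarantees that at each stage we can peel off one index set and the ``overlap'' of that index set with the union of all the others is contained in a single previously-seen index set, so that the correction term we introduce lives in the variables $x(\hat I_l)$ only.

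\textbf{Setup of the induction.} For $p=1$ there is nothing to prove ($h_1=f_1=f$, and $\hat I_1=\emptyset$). For $p=2$, Assumption~\ref{assum:sparse.conditions}~$(i)$ forces $\hat I_2 = I_2\cap I_1$, and $s_2=1$; Lemma~\ref{lem:p.equal.2} (applied with $I_1,I_2$ and the decomposition $f=f_1+f_2$) produces exactly a continuous $\varphi_2\in\R(x(\hat I_2))$ of the stated form $q_2/\|y\|_2^{2k_2}$ with $q_2$ homogeneous of degree $2(d+k_2)$, such that $f=(f_1-\varphi_2)+(f_2+\varphi_2)$ with both summands rational positive definite forms of degree $2d$. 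Matching with the statement: $h_2=f_2+\varphi_2$, and $h_1=f_1+\varphi_1-\delta_{1,s_2}\varphi_2=f_1-\varphi_2$ since $\varphi_1:=0$ and $s_2=1$. So the base cases hold.

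\textbf{Inductive step.} Assume the result for $p-1$ index sets. Given $I=\bigcup_{l=1}^p I_l$ satisfying the RIP, consider the two sets $A:=\bigcup_{l=1}^{p-1} I_l$ and $B:=I_p$. By the RIP, $\hat I_p = I_p\cap A\subset I_{s_p}$ for some $s_p\in\{1,\dots,p-1\}$. Write $g:=\sum_{l=1}^{p-1} f_l\in\R(x(A))$; it is continuous and homogeneous of degree $2d$, but it need not be positive definite, so Lemma~\ref{lem:p.equal.2} does not apply to the pair $(g,f_p)$ directly. The remedy is the same splitting trick used inside the proof of Lemma~\ref{lem:p.equal.2}: since $f=g+f_p$ is a rational positive definite form, there is $\varepsilon>0$ with $f\ge\varepsilon\|x\|_2^{2d}$, and one defines the marginal
\[
h_p(y):=\min\{\, g(\xi,y)-\tfrac{\varepsilon}{2}\|(\xi,y)\|_2^{2d}\ :\ \xi\in\R^{|A\setminus B|}\,\},\qquad y\in\R^{|A\cap B|}=\R^{\hat n_p},
\]
which (exactly as in Lemma~\ref{lem:p.equal.2}) is well-defined, continuous, even, and homogeneous of degree $2d$, and satisfies $g-h_p\ge\tfrac{\varepsilon}{2}\|x(A)\|_2^{2d}$ and $f_p+h_p\ge\tfrac{\varepsilon}{2}\|x(I_p)\|_2^{2d}$. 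Approximating $h_p$ on $\S^{\hat n_p-1}$ by a homogeneous polynomial via Lemma~\ref{lem:approx.form.on.sphere}, then homogenizing/dehomogenizing with $\|x(\hat I_p)\|_2^2$ as in Lemma~\ref{lem:continuity.homogeneous}, yields a continuous $\varphi_p=q_p/\|x(\hat I_p)\|_2^{2k_p}\in\R(x(\hat I_p))$ with $q_p$ homogeneous of degree $2(d+k_p)$ and $|\varphi_p-h_p|\le\tfrac{\varepsilon}{4}\|x(\hat I_p)\|_2^{2d}$ on $\R^{\hat n_p}\setminus\{0\}$. Consequently $g-\varphi_p$ and $f_p+\varphi_p$ are both rational positive definite forms of degree $2d$: the first because $g-\varphi_p=(g-h_p)+(h_p-\varphi_p)\ge\tfrac{\varepsilon}{4}\|x(A)\|_2^{2d}$, the second similarly. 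Crucially, $g-\varphi_p=\sum_{l=1}^{p-1}f_l-\varphi_p$ is a sum of $p-1$ continuous homogeneous rational functions in the variables $x(I_1),\dots,x(I_{p-1})$, where $\varphi_p$ is absorbed into the $s_p$-th term (legitimate since $\hat I_p\subset I_{s_p}$): set $f'_{s_p}:=f_{s_p}-\varphi_p$ and $f'_l:=f_l$ for $l\ne s_p$, $l<p$. Now the index sets $I_1,\dots,I_{p-1}$ still satisfy the RIP (with the same $s_l,\hat I_l$ for $l\le p-1$), and $g-\varphi_p=\sum_{l=1}^{p-1}f'_l$ is a rational positive definite form of degree $2d$, so the inductive hypothesis gives continuous $\varphi_l\in\R(x(\hat I_l))$ of the required form for $l=2,\dots,p-1$ and $g-\varphi_p=\sum_{l=1}^{p-1}h_l$ with $h_l=f'_l+\varphi_l-\sum_{j=l+1}^{p-1}\delta_{l,s_j}\varphi_j$ each a continuous rational positive definite form. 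Adding back $f_p+\varphi_p$ and bookkeeping the indices gives $f=\sum_{l=1}^p h_l$ with $h_p=f_p+\varphi_p$ and $h_l=f_l+\varphi_l-\sum_{j=l+1}^{p}\delta_{l,s_j}\varphi_j$ for $l<p$ (the new term $-\delta_{l,s_p}\varphi_p$ appearing precisely when $l=s_p$, matching the substitution $f_{s_p}\rightsquigarrow f_{s_p}-\varphi_p$); all $h_l$ are continuous rational positive definite forms of degree $2d$, completing the induction.

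\textbf{Where the difficulty lies.} The only genuinely delicate point — and the reason the non-compact case is ``much more involved'' than Grimm et al.\ — is the coercivity/well-definedness of the marginal $h_p$: in the compact setting one simply minimizes a continuous function over a compact set, whereas here one must exploit the strict positivity $f\ge\varepsilon\|x\|_2^{2d}$ to show $\xi\mapsto g(\xi,y)-\tfrac{\varepsilon}{2}\|(\xi,y)\|_2^{2d}$ is coercive (this is exactly the estimate $\psi(\xi,y)\ge\tfrac{\varepsilon}{2}\|\xi\|_2^{2d}-f_p(y,0)$ from the proof of Lemma~\ref{lem:p.equal.2}, now with $f_p$ in place of $f_2$ and $g=\sum_{l<p}f_l$ in place of $f_1$). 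The subtlety is that $g$ itself is \emph{not} assumed positive definite, so one genuinely needs to add $f_p$ back to recover positivity; checking that this argument goes through verbatim with $g$ in the role of $f_1$ is the main thing to verify. Everything else — continuity of $h_p$, homogeneity, the polynomial approximation on the sphere, and the index bookkeeping forced by the RIP — is a routine adaptation of the two-set case.
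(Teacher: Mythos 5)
Your proof is correct and follows essentially the same route as the paper: induction on $p$, peeling off $I_p$, and using the RIP ($\hat I_p\subset I_{s_p}$) to absorb the correction term $\varphi_p$ into the $s_p$-th summand before invoking the induction hypothesis. The only (harmless) difference is that you re-derive the two-block splitting inline because you assumed Lemma~\ref{lem:p.equal.2} cannot be applied when $g=\sum_{l<p}f_l$ is not positive definite; in fact that lemma only requires the \emph{total} $f$ to be positive definite (the summands need only be continuous and homogeneous), so the paper applies it directly with $f_1\leftarrow\sum_{l<p}f_l$, $f_2\leftarrow f_p$, which is exactly the construction you repeat.
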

\begin{proof}
The proof is by induction on $p\in\N^{\ge 2}$. For $p=2$, the desired result follows from Lemma~\ref{lem:p.equal.2}. 
Next, assume that Lemma \ref{lem:sparse.posi.form.general.case} holds for $p=\bar p-1$ and let us prove that it is also true for $p=\bar p$. 
By applying Lemma~\ref{lem:p.equal.2} with $I_1=\bigcup_{j = 1}^{{\bar p}-1} {{I_j}}  $, $I_2=I_{\bar p}$ and $f_1=\sum_{l = 1}^{{\bar p}-1} {{f_l}} $, $f_2=f_{\bar p}$, there exists a continuous rational function $\varphi_{\bar p}\in\R(x(\hat I_{\bar p}))$ defined by
\[\varphi_{\bar p}(y) := \frac{q_{\bar p}(y)}{\|y\|_2^{2k_{\bar p}}}\,,\, \forall y\in \R^{\hat n_{\bar p}}\,,\]
where $q_{\bar p}\in \R[x(\hat  I_{\bar p})]$ is homogeneous of degree $2(d+k_{\bar p})$ for some $k_{\bar p}\in\N$ (only depending on $d$, $\varepsilon$ and $f_1+\dots+f_{{\bar p}-1}$) such that 
\[f= h_{\{1,\dots,{\bar p}-1\}}+h_{\bar p}\,,\]
 where $h_{\{1,\dots,{\bar p}-1\}}:=f_1+\dots+f_{{\bar p}-1}-\varphi_{\bar p} \in\R\left(x\left(\bigcup_{j = 1}^{{\bar p}-1} {{I_j}}\right)\right)$ and $h_{\bar p}:=f_{\bar p}+\varphi_{\bar p}\in\R(x(I_{\bar p}))$ are continuous rational positive definite forms of degree $2d$.
By the RIP, there exists $s_{\bar p}\in\{2,\dots,{\bar p}-1\}$ such that $\hat I_{\bar p}\subset I_{s_{\bar p}}$, so $\varphi_{\bar p}\in\R(x(I_{s_{\bar p}}))$. 
Then $h_{\{1,\dots,{\bar p}-1\}}=\sum_{j = 1}^{{\bar p}-1} {(f_j-\delta_{j,s_{\bar p}} \varphi_{\bar p})}$ satisfies $f_j-\delta_{j,s_{\bar p}} \varphi_{\bar p}\in\R(x(I_j))$, $j=1,\dots,{\bar p}-1$. 
From this and by the induction hypothesis, there exist continuous rational functions $\varphi_l\in\R(x(\hat  I_l))$, $l=2,\dots,\bar p -1$, defined by
\[\varphi_l(y)= \frac{q_l(y)}{\|y\|_2^{2k_l}}\,,\, \forall y\in \R^{\hat n_l}\,,\,l=2,\dots,{{\bar p}-1}\,,\]
with $q_l\in \R[x(\hat I_l)]$ being homogeneous of degree $2(d+k_l)$ for some $k_l\in\N$, $l=2,\dots,{{\bar p}-1}$, such that 
\[ h_{\{1,\dots,{\bar p}-1\}}=\sum\limits_{l = 1}^{{\bar p}-1} h_l\,,\]
 where for $l=1,\dots,{{\bar p}-1}$,
\[h_l:=(f_l-\delta_{l,s_{\bar p}} \varphi_{\bar p})+\varphi_l-\sum\limits_{j = l+1}^{{{\bar p}-1}} \delta_{l,s_j}\varphi_j =f_l+\varphi_l-\sum\limits_{j = l+1}^{{{\bar p}}} \delta_{l,s_j}\varphi_j\in\R(x(I_l))\,,\] 
is a continuous rational positive definite form of degree $2d$.
Then $f=h_{\{1,\dots,{\bar p}-1\}}+h_{\bar p}=\sum_{l = 1}^{{\bar p}} h_l$, yielding the conclusion.
\end{proof}
The following result shows that one may  write a sparse rational positive definite form as a rational SOS with uniform denominator. 
%
\begin{lemma}\label{lem:sum.of.fractional}
Let ${I} = \bigcup_{l = 1}^p {{I_l}} $ and $d\in\N^{>0}$.
Let $f\in\R(x)$ be a rational positive definite form of degree $2d$ such that  $f=\sum_{l=1}^p \frac{q_l}{\|x(I_l)\|_2^{2k_l}}$, where $q_l\in \R[x( I_l)]$ is homogeneous of degree $2(d+k_l)$ for some $k_l\in\N$, $l=1,\dots,p$. 
Then there exists $\sigma\in \Sigma[x]_{d+k(p+1)}$ for some $k\in\N$ such that
\begin{equation}
    \label{lem3.6-1}
f=\frac{\sigma}{{\|x\|_2^{2k}}\prod_{l=1}^p {\|x(I_l)\|_2^{2k}}}\,.
\end{equation}
\end{lemma}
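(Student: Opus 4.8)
The plan is to reduce the statement to Reznick's Positivstellensatz \cite{reznick1995uniform}, using as the main lever the decomposition of a sparse positive definite form into sparse positive definite \emph{rational} forms that is already available. The case $p=1$ is a warm‑up and already shows the shape of the argument: there $I_1=I$, so $f=q_1/\|x\|_2^{2k_1}$ and $q_1=f\,\|x\|_2^{2k_1}$ is a genuine positive definite \emph{polynomial} form of degree $2(d+k_1)$. Reznick's theorem gives $m\in\N$ with $\|x\|_2^{2m}q_1\in\Sigma[x]$, hence for every $k\ge (m+k_1)/2$ one has $\sigma:=q_1\,\|x\|_2^{2(2k-k_1)}=f\,\|x\|_2^{4k}\in\Sigma[x]$; since the denominator prescribed by \eqref{lem3.6-1} is here exactly $\|x\|_2^{2k}\|x(I_1)\|_2^{2k}=\|x\|_2^{4k}$ and $\deg\sigma=2d+4k=2\bigl(d+k(p+1)\bigr)$, this settles $p=1$.

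For general $p$ the summands $q_l/\|x(I_l)\|_2^{2k_l}$ need not be positive, so I would first replace $f$ by a sum of positive definite pieces. Each $q_l/\|x(I_l)\|_2^{2k_l}$ is continuous and homogeneous of degree $2d$ by Lemma~\ref{lem:continuity.homogeneous}, so Lemma~\ref{lem:sparse.posi.form.general.case} applies and yields $f=\sum_{l=1}^p h_l$ with $h_l\in\R(x(I_l))$ a continuous rational positive definite form of degree $2d$, of the explicit form $h_l=\tfrac{q_l}{\|x(I_l)\|_2^{2k_l}}+\varphi_l-\sum_{j>l}\delta_{l,s_j}\varphi_j$ where each $\varphi_j=q_j'/\|x(\hat I_j)\|_2^{2k_j'}$. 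In particular every denominator occurring in $h_l$ is a power of $\|x(T)\|_2^2$ with $T\subseteq I_l$. It then suffices to produce, for each $l$, a representation $h_l=\sigma_l/\rho_l$ with $\sigma_l\in\Sigma[x(I_l)]$ and $\rho_l$ a product of powers of such $\|x(T)\|_2^2$; for a single large enough exponent $k$ every $\rho_l$ divides $\|x\|_2^{2k}\prod_{j=1}^p\|x(I_j)\|_2^{2k}$, and multiplying each $\sigma_l$ by the complementary (hence SOS) factor turns $\sum_l h_l$ into $\sigma/\bigl(\|x\|_2^{2k}\prod_j\|x(I_j)\|_2^{2k}\bigr)$ with $\sigma\in\Sigma[x]$; a degree count after fixing $k$ then gives $\sigma\in\Sigma[x]_{d+k(p+1)}$.

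The remaining and hardest step is the per‑piece representation of $h_l$. Viewed as a rational function of its own $n_l$ variables, $h_l$ is again a positive definite form written as a sum of forms over powers of squared norms of (now nested) index sets — i.e.\ an instance of the statement being proved, but with a simpler index structure — so I would proceed by an inner induction whose base case is once more Reznick, the inductive step splitting off one denominator via Lemma~\ref{lem:p.equal.2} and then gluing the pieces back. Concretely one clears the denominator of $h_l$ to a polynomial $N_l\ge\varepsilon\|x(I_l)\|_2^{2d}\cdot(\text{SOS})$, works in the finitely generated $\R$‑algebra obtained from $\R[x(I_l)]$ by restricting to $\|x(I_l)\|_2^2=1$ and adjoining inverses of the squared norms in the denominator of $h_l$, certifies positivity of $h_l$ there through a generalization of Schm\"udgen's Positivstellensatz for such algebras (Lemma~\ref{lem:positive.in.preordering}, in the spirit of Schweighofer; the even‑degree approximation of Lemma~\ref{lem:approx.form.on.sphere} is convenient for handling the rational terms), and finally homogenizes to eliminate the sphere relation and clear denominators. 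The main obstacle is exactly this step: $h_l$ is only required to be positive on a non‑compact set, and once those squared norms are inverted the natural algebra fails to be geometrically bounded, so one must either cut down to a compact sub‑object and recover the full statement by a limiting argument, or choose generators so that the hypothesis $A=H'(A)$ of Lemma~\ref{lem:positive.in.preordering} still holds; in addition one must verify that the denominators produced are precisely the prescribed $\|x(T)\|_2^2$‑powers (forcing a single uniform $k$) and carefully track the homogenization degrees to land on $\sigma\in\Sigma[x]_{d+k(p+1)}$.
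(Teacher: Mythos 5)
Your $p=1$ warm-up is correct, but for general $p$ the proposal has a genuine gap, located exactly where you flag ``the main obstacle''. First, the preliminary decomposition $f=\sum_l h_l$ via Lemma~\ref{lem:sparse.posi.form.general.case} does not simplify matters for the present lemma: each $h_l$ is again a sum of rational forms with several distinct norm denominators ($\|x(I_l)\|_2^2$ and various $\|x(\hat I_j)\|_2^2$ with $\hat I_j\subsetneq I_l$), i.e.\ an instance of the very statement being proved, and your proposed inner induction has no usable base case. Clearing the denominators of such a piece produces a numerator $N_l$ that vanishes on the nontrivial subspaces $\{x(T)=0\}$ for every proper subset $T$ appearing in a denominator (since $h_l$ is continuous there while the cleared factor vanishes), so $N_l$ is nonnegative but \emph{not} positive definite and Reznick's theorem does not apply to it; a plain reduction to Reznick therefore cannot close the induction. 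In the paper this decomposition step belongs to the proof of Theorem~\ref{theo:rep.sum.pd}, not of Lemma~\ref{lem:sum.of.fractional}, which is proved directly for $f$.

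Second, the per-piece certificate --- the actual heart of the lemma --- is left unresolved in your sketch: you propose adjoining ``inverses of the squared norms'', correctly observe that the resulting algebra fails to be geometrically bounded, and then offer two unspecified escape routes. The paper's proof consists precisely in resolving this point. One takes $A$ to be the $\R$-algebra generated by $x_1,\dots,x_n$ together with the finitely many \emph{bounded, continuous} fractions $x(I_l)^\alpha/\|x(I_l)\|_2^{2k_l}$ with $|\alpha|=2(d+k_l)$ (continuity by Lemma~\ref{lem:continuity.homogeneous}), and the preordering $T$ generated by $\pm(1-\|x\|_2^2)$, so that point evaluations at $\S^{n-1}$ are dense in $\Sper_T A$; compactness of the sphere then gives $A=H(A)=H'(A)$, positivity of $f$ on the sphere gives $f>0$ on $\Sper_T A$, and Lemma~\ref{lem:positive.in.preordering} yields $f=\sigma+(1-\|x\|_2^2)\psi$ with $\sigma\in\Sigma A^2$, $\psi\in A$. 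Finally one substitutes $x\mapsto x/\|x\|_2$, multiplies by $\|x\|_2^{2(k+d)}\prod_l\|x(I_l)\|_2^{2k}$ for $k$ large, writes the result as $\sum_j (h_j+v_j\|x\|_2)^2$ and uses that the left-hand side is a polynomial while $\|x\|_2$ is not, forcing $\sum_j h_jv_j=0$ and leaving the polynomial SOS $\sum_j (h_j^2+v_j^2\|x\|_2^2)$ over the prescribed uniform denominator. None of these steps --- the choice of generators that keeps the algebra bounded, the sphere preordering and density argument, and the parity trick eliminating the $\|x\|_2$ cross terms --- is supplied in your proposal, so as written it does not constitute a proof.
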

\begin{proof} 
Denote by $A$ the $\R$-algebra finitely generated by polynomials $x_j$, $j=1,\dots,n$, and rational functions $\frac{x(I_l)^\alpha}{\|x(I_l)\|_2^{2k_l}}$, $\alpha\in \N^{n_l}$ such that $|\alpha|=2(d+k_l)$, $l=1,\dots,p$. 
Let $C(\R^n)$ be the space of all continuous functions on $\R^n$. By Lemma \ref{lem:continuity.homogeneous},
the function $\frac{x(I_l)^\alpha}{\|x(I_l)\|_2^{2k_l}}$ is continuous 
for each $l=1,\dots,p$, and $\alpha\in \N^{n_l}$ with $|\alpha|=2(d+k_l)$.
Then $A$ is a commutative ring and $\R[x]\subset A\subset \R(x)\cap C(\R^n)$. 

Denote by $T$ the preordering generated by $\pm (1-\|x\|_2^2)$, i.e., $T$ consists of all elements of the form $\sigma+ (1-\|x\|_2^2) \psi$, for $\sigma\in \Sigma A^2$ and $\psi\in A$. 
Then $A$ is a preordered ring with fixed preordering $T$.  

We first prove that $\S^{n-1}=\{x\in\R^n\,:\,h(x)\ge 0\,,\,\forall h\in T\}$. Obviously $\S^{n-1}\subseteq \{x\in\R^n\,:\,h(x)\ge 0\,,\,\forall h\in T\}$. 
For the other inclusion, assume by contradiction that there exists $a\in \R^n\backslash \S^{n-1}$ such that $h(a)\ge 0$ for all $ h\in T$. Then $1-\|a\|_2^2\ne 0$. By selecting $h:=-(1-\|a\|_2^2)(1-\|x\|_2^2)\in T$, one obtains the contradiction
$0\le h(a)=-(1-\|a\|_2^2)^2<0$.

Next, notice that $\Sper_T A$ is a  Hausdorff space and contains all mappings $\hat a: A\to \R$, $h\mapsto h(a)$ for $a\in \S^{n-1}$ (see \cite{marshall2002general}). 
Here $\hat a$ is well-defined by the continuity of each element in $A$. 
In addition, since $\S^{n-1}$ is compact, $(\hat a)_{a\in \S^{n-1}}$ is dense in $\Sper_T A$ in the topology induced by the 
sup-norm, i.e., for each $r>0$ and for each $\varphi\in \Sper_T A$ there exists $a\in \S^{n-1}$ such that $\sup_{h\in A}|h(a)-\varphi(h)|=\sup_{h\in A}|(\hat a -\varphi)(h)|\le r$ (see \if{\cite[Example 1]{jacobi2001representation},}\fi \cite[Section 2]{marshall2001extending} and \cite[Section 2]{berr2001positive}).

Let $H(A)$ (resp. $H'(A)$) be the ring of geometrically (resp. arithmetically) bounded elements in $A$.
Since $(\hat a)_{a\in\S^{n-1}}$ is dense in $\Sper_T A$, 
\[H(A)=\{h\in A:\, h \text{ is bounded on }\S^{n-1}\}=A\,.\]
The latter equality is due to  the compactness of $\S^{n-1}$ and the inclusion $A\subset C(\R^n)$.
Combining this together with  \eqref{eq:geometrically.implies.arithmetically}, one obtains $A=H'(A)$. 

Next we claim that $f>0$ on $\Sper_T A$. Indeed $f\ge \varepsilon\|x\|_2^{2d}$ on $\R^n$ for some $\varepsilon>0$, because $f$ is a rational positive definite form of degree $2d$. Therefore $f\ge \varepsilon$ on $\S^{n-1}$. Let $\varphi\in \Sper_T A$ be fixed, arbitrary. By denseness of $\S^{n-1}$ in $\Sper_T A$, there exists $a\in \S^{n-1}$ such that $|f(a)-\varphi(f)|\le \frac \varepsilon 2$. Thus, $\varphi(f)=f(a)-(f(a)-\varphi(f))\ge \varepsilon- \frac \varepsilon 2=\frac \varepsilon 2>0$, and the result follows.

Next, since $f\in A$ and $f>0$ on $\Sper_T A$, then by Lemma \ref{lem:positive.in.preordering} $f\in T$. 
Therefore $f=\sigma+ (1-\|x\|_2^2) \psi$ for some $\sigma\in \Sigma A^2$ and  $\psi\in A$. 
By replacing $x$ by $\frac{x}{\|x\|_2}$ and noting that $f$ is homogeneous of degree $2d$, $\|x\|_2^{-2d}f=\sigma(\frac{x}{\|x\|_2})$. 
By multiplying both sides with ${\|x\|_2^{2(k+d)}}\prod_{l=1}^p {\|x(I_l)\|_2^{2k}}$ for some  large enough $k$, there exist $r\in\N$ and $h_j,v_j\in\R[x]$, $j=1,\dots,r$, such that
\[\begin{array}{rl}
\left({\|x\|_2^{2k}}\prod\limits_{l=1}^p {\|x(I_l)\|_2^{2k}}\right)f&=\left({\|x\|_2^{2(k+d)}}\prod\limits_{l=1}^p {\|x(I_l)\|_2^{2k}}\right)\sigma\left(\frac{x}{\|x\|_2}\right)\\
&=\sum\limits_{j=1}^r {(h_j+v_j\|x\|_2)^2}=\sum\limits_{j=1}^r {(h_j^2+v_j^2\|x\|_2^2)}+2\|x\|_2 \sum\limits_{j=1}^r {h_jv_j}\,.
\end{array}\]
Recall that $f=\sum_{l=1}^p \frac{q_l}{\|x(I_l)\|_2^{2k_l}}$. Therefore 
assume that $k$ is large enough to ensure that $\left({\|x\|_2^{2k}}\prod_{l=1}^p {\|x(I_l)\|_2^{2k}}\right)f$ is a polynomial. Then $\sum_{j=1}^r {(h_j^2+v_j^2\,\|x\|_2^2)}+2\|x\|_2 \sum_{j=1}^r {h_jv_j}$ must be a polynomial. However since $\|x\|_2$ is not a polynomial, then necessarily $\sum_{j=1}^r {h_jv_j}=0$. Hence,
\[\left({\|x\|_2^{2k}}\prod\limits_{l=1}^p {\|x(I_l)\|_2^{2k}}\right)f=\sum\limits_{j=1}^r {(h_j^2+v_j^2\|x\|_2^2)}\,,\]
which yields \eqref{lem3.6-1}.
\end{proof}
\begin{remark}
Observe that Reznick's Positivstellensatz is a particular case of Lemma \ref{lem:sum.of.fractional} with $p=1$. 
Our proof is similar to the one of \cite[Theorem 3.7]{schabert2019uniform}, which addresses the case $p=1$.
\end{remark}
\section{Proofs}
\label{sec:repre.theor}
\subsection{Proof of Theorem \ref{theo:rep.sum.pd}}
\label{proof:rep.sum.pd}
\begin{proof}
One has $f=\sum_{l=1}^p f_l$ with $f_l:=\frac{p_l}{\|x(\tilde I_l)\|^{2k_l}}$, $l=1,\dots,p$. 
By Lemma \ref{lem:continuity.homogeneous}, the  function $f_l\in\R(x(I_l))$ is continuous and homogeneous of degree $2d$, for each $l=1,\dots,p$.  
By applying Lemma \ref{lem:sparse.posi.form.general.case},
there exist continuous functions $\varphi_l\in\R(x(\hat I_l))$, $l=2,\dots,p$, defined by
\[\varphi_l(y)= \frac{q_l(y)}{\|y\|_2^{2k_l}}\,,\, \forall y\in \R^{\hat n_l}\,,\,l=2,\dots,p\,,\]
where $q_l\in \R[x(\hat I_l)]$ is homogeneous of degree $2(d+k_l)$ for some $k_l\in\N$, $l=2,\dots,p$, and one has
\[ f=\sum\limits_{l = 1}^p h_l\,,\]
 where each $h_l:=f_l+\varphi_l-\sum_{j = l+1}^{p} \delta_{l,s_j}\varphi_j \in\R(x(I_l))$,  $l=1,\dots,p$ (with $\varphi_1:=0$) is a  continuous rational positive definite form of degree $2d$.
Then, we apply Lemma \ref{lem:sum.of.fractional} with the notation $f \leftarrow h_l$, $I \leftarrow I_l$ and $I_l \leftarrow I_l \cup \{\hat I_j:\,s_j=l,\,j=l+1,\dots,p\}$. 
Therefore, there exist $\hat k_l\in\N$ and $\psi_l\in \Sigma[x( I_l)]_{d+\hat k_l(1+\deg(\Phi_l)/2)}$  such that
\[h_l=\frac{\psi_l}{\|x( I_l)\|_2^{2\hat k_l}\|x(\hat I_l)\|_2^{2(1-\delta_{l,1})\hat k_l}\prod\limits_{j = l+1}^{p}{ \|x(\hat I_j)\|_2^{2\hat k_l\delta_{l,s_j}}}} \,,\, l=1,\dots,p\,.\] 
Let $k:=\max\{\hat k_1,\dots,\hat k_p\}$ and define for all $l=1,\dots,p$
\[\sigma_l:=\psi_l\|x( I_l)\|_2^{2(k-\hat k_l)}{\|x(\hat I_l)\|_2^{2(1-\delta_{l,1})(k-\hat k_l)}\prod\limits_{j = l+1}^{p}{ \|x(\hat I_j)\|_2^{2(k-\hat k_l)\delta_{l,s_j}}}}\,.\]
Then $\sigma_l\in \Sigma[x( I_l)]_{d+k (1+\deg(\Phi_l)/2)}$ and \eqref{eq:homogeneous} follows, yielding the conclusion.
\end{proof}
\subsection{Proof of Corollary \ref{coro:rep.sum.positive}}
\label{proof:rep.sum.positive}
\begin{proof}
Let $\bar f(x,x_{n+1}):=x_{n+1}^{2d}f({x}/{x_{n+1}})$ be the degree-$2d$ homogenization of $f$. 
Set $\bar x:=(x,x_{n+1})$, $\bar I:=I\cup\{n+1\}$, $\bar I_l:=I_l\cup\{n+1\}$ and $\hat {\bar I}_l:=\hat I_l\cup\{n+1\}$, for all $l=1,\dots,p$. 
Then ${\bar I} = \bigcup_{l = 1}^p {{\bar I_l}} $ and 
\[\forall l \in \{ {1,\dots,p} \}\,,\,\hat {\bar I}_l={\bar I_{l}} \cap \left( {\bigcup\limits_{j = 1}^{l-1} {{\bar I_j}} } \right) \subset {\bar I_{s_l}}\quad\text{and}\quad
\bar f = \sum\limits_{l = 1}^p {{\bar f_l}} \,.\]
Since $f$ is nonnegative, $\bar f$ is also nonnegative. 

We first prove that $\bar f+\varepsilon \sum_{l = 1}^p {\|\bar x(\bar I_l)\|_2^{2d}}$ is a positive definite form. 
Let $y\in\R^{n+1}$ such that $\bar f(y)+\varepsilon \sum_{l = 1}^p {\|y(\bar I_l)\|_2^{2d}}=0$. 
By the nonnegativity of $\bar f$ and $\|\bar x(\bar I_l)\|_2^{2d}$, $l=1,\dots,p$,  
\[\bar f(y)= {\|y(\bar I_1)\|_2^{2d}}=\dots={\|y(\bar I_p)\|_2^{2d}}=0\,.\]
Hence $y(\bar I_l)=0$, $l=1,\dots,p$, and therefore since ${\bar I} = \bigcup_{l = 1}^p {{\bar I_l}} $, $y=0$.

 By Theorem \ref{theo:rep.sum.pd}, there exist $k\in\N$ and $ { \psi}_l\in \Sigma[\bar x( \bar I_l)]_{d+k\omega_l}$, $l=1,\dots,p$,  such that 
\begin{equation}\label{eq:homogeneous.formula}
\bar f+\varepsilon \sum\limits_{l = 1}^p {\|\bar x(\bar I_l)\|_2^{2d}}=\sum\limits_{l = 1}^p {\frac{{ \psi}_l }{\|\bar x( {\bar I}_l)\|_2^{2k} \tilde \Phi_l^{k}}}\,,
\end{equation}
 where $\tilde  \Phi_l:={\|\bar x(\hat {\bar I}_l)\|_2^{2(1-\delta_{l,1})}\prod_{j = l+1}^{p}{ \|\bar x(\hat {\bar I}_j)\|_2^{2\delta_{l,s_j}}}}\in \R[\bar x( \bar I_l)]$ and $\omega_l=\deg(\tilde  \Phi_l)+1$, $l=1,\dots,p$.
Letting $x_{n+1}:=1$ in \eqref{eq:homogeneous.formula} yields
\begin{equation*}
f+\varepsilon \sum\limits_{l = 1}^p {\theta_l^{d}}=\sum\limits_{l = 1}^p {\frac{\sigma_l }{\theta_l^kD_l^k}}\,,
\end{equation*}
 with $D_l=\tilde \Phi_l(x,1)\in \R[x( I_l)]$ and $\sigma_l:= {\psi}_l (x,1)\in \Sigma[x( I_l)]_{d+k\omega_l}$.
Hence the conclusion follows since $\Theta_l=\theta_lD_l$, $l=1,\dots,p$.
\end{proof}
\subsection{Proof of Corollary \ref{coro:rep.sum.positive.con}}
\label{proof:rep.sum.positive.con}
\begin{proof}
Recall that $u_j=\lceil \deg(g_j)/2\rceil$, for all $j=1,\dots,m$.
Define $\lambda_j:=(\|g_j\|_1+1)^{-1}$, for all $j=1,\dots,m$. 
We claim that 
\begin{equation}\label{eq:bound.by.1}
\left|\frac{\lambda_jg_j}{\theta_l^{u_j}}\right|\le \frac{\|g_j\|_1}{\|g_j\|_1+1}<1\,,\,j\in J_l\,,\, l=1,\dots,p\,.
\end{equation}
Indeed $|x^{\alpha}|\le \theta_l^{u_j}$ for all $\alpha\in\N^{I_l}_{2u_j}$, $j\in J_l$, $l=1,\dots,p$, which implies
\[\left|\frac{g_j}{\theta_l^{u_j}}\right| = \left|\frac{\sum_{\alpha\in\N^n_{2u_j}}{g_{j,\alpha}x^{\alpha}}}{\theta_l^{u_j}}\right| \le \sum\limits_{\alpha\in\N^n_{2u_j}} {|g_{j,\alpha}|\frac{|x^{\alpha}|}{\theta^{u_j}_l}}\le \|g_j\|_1\,.\]
For each $k\in\N$ introduce
\begin{equation}\label{eq:formula.f.k}
Q_k:=f+\frac{\varepsilon} 2 \sum\limits_{l = 1}^p {\theta_l^{d}} - \sum\limits_{l = 1}^p \sum\limits_{j\in J_l} 
{{\left(1-\frac{\lambda_jg_j}{\theta_l^{u_j}}\right)^{2k^2}}\left(\frac{\lambda_jg_j}{\theta_l^{u_j}}\right)^{2k+1}} \,.
\end{equation}
Let us show that $Q_k$ is nonnegative for $k$ sufficiently large. 
Since $2d> \deg (f)$, $f+\frac{\varepsilon} 2 \sum_{l = 1}^p {\theta_l^{d}}$ is coercive, i.e., $\lim_{\|x\|_2\to \infty} \left(f+\frac{\varepsilon} 2 \sum_{l = 1}^p {\theta_l^{d}}\right)=\infty$. 
Thus, there exists $M>0$ such that $f+\frac{\varepsilon} 2 \sum_{l = 1}^p {\theta_l^{d}}\ge 1$ on $B(0,M)^c$, where $B(0,M)$ stands for the open ball of radius M centered at the origin.

\noindent (I) Let $y\in B(0,M)^c$ be fixed and  $\Lambda:=\{j\in J:\,g_j(y)\ge 0\}$. 
For each $k \in \N$
\[\begin{array}{rl}
Q_k(y)\ge &1 - \sum\limits_{l = 1}^p \sum\limits_{j\in J_l\cap \Lambda}{{\left(1-\frac{\lambda_jg_j(y)}{\theta_l(y)^{u_j}}\right)^{2k^2}}\left(\frac{\lambda_jg_j(y)}{\theta_l(y)^{u_j}}\right)^{2k+1}}\\
&- \sum\limits_{l = 1}^p \sum\limits_{j\in J_l\backslash \Lambda}{{\left(1-\frac{\lambda_jg_j(y)}{\theta_l(y)^{u_j}}\right)^{2k^2}}\left(\frac{\lambda_jg_j(y)}{\theta_l(y)^{u_j}}\right)^{2k+1}}\\
\ge & 1 - \sum\limits_{l = 1}^p \sum\limits_{j\in J_l\cap \Lambda}{{\left(1-\frac{\lambda_jg_j(y)}{\theta_l(y)^{u_j}}\right)^{2k^2}}\left(\frac{\lambda_jg_j(y)}{\theta_l(y)^{u_j}}\right)^{2k+1}}\\
\ge & 1 - \sum\limits_{l = 1}^p \sum\limits_{j\in J_l\cap \Lambda}{\left(\frac{\|g_j\|_1}{\|g_j\|_1+1}\right)^{2k+1}}\qquad \text{(by \eqref{eq:bound.by.1})}\\
\ge & 1 - \sum\limits_{l = 1}^p \sum\limits_{j\in J_l}{\left(\frac{\|g_j\|_1}{\|g_j\|_1+1}\right)^{2k+1}}
\,.
\end{array}\]
Since $y$ was arbitrary in $B(0,M)^c$, 
\[\inf\{Q_k(x):\,{x\in B(0,M)^c}\}\ge 1 - \sum\limits_{l = 1}^p \sum\limits_{j\in J_l}{\left(\frac{\|g_j\|_1}{\|g_j\|_1+1}\right)^{2k+1}}\to 1\text{ as }k\to \infty\,.\]
Thus, $Q_k$ is nonnegative on $ B(0,M)^c$ for some large enough $k$. 

\noindent (II) Then, note that $\lim_{k\to\infty}-(1-a)^{2k^2}a^{2k+1}=0$ for all $a\in(0,1)$ and $\lim_{k\to\infty}(1+a)^{2k^2}a^{2k+1}=\infty$ for all $a\in(0,1)$.
By using \eqref{eq:bound.by.1}, each term $-{{\left(1-\frac{\lambda_jg_j}{\theta_l^{u_j}}\right)^{2k^2}}\left(\frac{\lambda_jg_j}{\theta_l^{u_j}}\right)^{2k+1}}$ involved in \eqref{eq:formula.f.k} can be written either as $-(1-a)^{2k^2}a^{2k+1}$ when $g_j(x) > 0$ or as $(1+a)^{2k^2}a^{2k+1}$ when $g_j(x) < 0$ for some $a \in (0, 1)$.
Therefore,  $Q_k\to \infty$ pointwise on $\overline{B(0,M)}\backslash S(g)$ and $Q_k\to f+\frac{\varepsilon} 2 \sum_{l = 1}^p {\theta_l^{d}}$ pointwise on $\overline{B(0,M)}\cap S(g)$. 
By compactness of $\overline{B(0,M)}$ and  positivity of $f+\frac{\varepsilon} 2 \sum_{l = 1}^p {\theta_l^{d}}$ on $S(g)$, $Q_k$ is nonnegative on $\overline{B(0,M)}$ for large enough $k$. 

\noindent (III) Let $K\in\N$ be fixed such that $Q_K$ is nonnegative. 
Define $r_j:=(2K^2+2K+1)u_j$ and $w_{j,l}:=(\theta_l^{u_j}-\lambda_jg_j)^{2K^2}(\lambda_jg_j)^{2K+1}$, so that $w_{j,l}\in \R[x(I_l)]_{2r_j}$, $j\in J_l$, $l=1,\dots,p$, and 
\[Q_K\,=\,f+\frac{\varepsilon} 2 \sum\limits_{l = 1}^p {\theta_l^{d}} - \sum\limits_{l = 1}^p \sum\limits_{j\in J_l}{\frac{w_{j,l}}{\theta_l^{r_j}}}\,.\]
With every $h\in\R(x)$ associate its degree-$2d$ homogenization $\bar h(x,x_{n+1}):=x_{n+1}^{2d}h({x}/{x_{n+1}})$. 
Then with same notation $\bar x$, $\bar I$, $\bar I_l$ and $\hat {\bar I}_l$ as in the proof of Corollary~\ref{coro:rep.sum.positive}:
\begin{equation} \label{eq:homogeneous.f.K}
\bar Q_K=\bar f+\frac{\varepsilon} 2 \sum\limits_{l = 1}^p {\|\bar x(\bar I_l)\|_2^{2d}} - \sum\limits_{l = 1}^p \sum\limits_{j\in J_l}{\frac{x_{n+1}^{2r_j}\bar w_{j,l}}{\|\bar x(\bar I_l)\|_2^{2r_j}}}\,.
\end{equation}
Equivalently:
\[\bar Q_K+\frac{\varepsilon} 2 \sum\limits_{l = 1}^p {\|\bar x(\bar I_l)\|_2^{2d}} = \sum\limits_{l = 1}^p {\left(\bar f_l+{\varepsilon}\|\bar x(\bar I_l)\|_2^{2d}-\sum\limits_{j\in J_l}{\frac{x_{n+1}^{2r_j}\bar w_{j,l}}{\|\bar x(\bar I_l)\|_2^{2r_j}}}\right)}=\sum\limits_{l = 1}^p F_l \,,\]
where $F_l:={\bar f_l+{\varepsilon}\|\bar x(\bar I_l)\|_2^{2d}-\sum_{j\in J_l}{\frac{x_{n+1}^{2r_j}\bar w_{j,l}}{\|\bar x(\bar I_l)\|_2^{2r_j}}}}\in\R(\bar x(\bar I_l))$ is homogeneous of degree $2d$, $l=1,\dots,p$.
In addition, $\bar Q_K$ is nonnegative by nonnegativity of $Q_K$.
Then there exists $\bar \varepsilon > 0$ such that
$\bar Q_K+\frac{\varepsilon} 2 \sum_{l = 1}^p {\|\bar x(\bar I_l)\|_2^{2d}} \ge \bar \varepsilon \|\bar x\|_2^{2d}$. 
Indeed, for $d=1$, it is trivial. For $d\ge 2$, let $d^*$ be such that $\frac{1}{d}+\frac{1}{d^*}=1$  and let us use H\"older's inequality as follows:
\[\left(\sum\limits_{l = 1}^p {\|\bar x(\bar I_l)\|_2^{2d}}\right)^{1/d}\left(\sum\limits_{l = 1}^p 1^{d^*}\right)^{1/d^*}\ge \sum\limits_{l = 1}^p {\|\bar x(\bar I_l)\|_2^{2}}\ge \|\bar x\|_2^{2}\,,\]
which implies the desired result for $\bar \varepsilon = \frac{\varepsilon}{2} p^{-d/d^*}$. 
Therefore $\bar Q_K+\frac{\varepsilon} 2 \sum_{l = 1}^p {\|\bar x(\bar I_l)\|_2^{2d}}$ is a positive definite form of degree $2d$. By Theorem \ref{theo:rep.sum.pd}, there exist $k\in\N$ and  $\hat \psi_l\in \Sigma[\bar x( \bar I_l)]_{d+k\omega_l}$, $l=1,\dots,p$,  such that 
\begin{equation*}
 \bar Q_K+\frac{\varepsilon} 2 \sum\limits_{l = 1}^p {\|\bar x(\bar I_l)\|_2^{2d}}=\sum\limits_{l = 1}^p {\frac{\hat \psi_l}{\|\bar x(\bar I_l)\|_2^{2k}\hat D_l^k }}\,.
\end{equation*}
 where $\hat D_l:={\|\bar x(\hat{\bar I}_l)\|_2^{2(1-\delta_{l,1})}\prod_{j = l+1}^{p}{ \|\bar x(\hat{\bar I}_j)\|_2^{2\delta_{l,s_j}}}}\in \R[\bar x( \bar I_l)]$, $l=1,\dots,p$. 
From this and \eqref{eq:homogeneous.f.K},
\[\bar f+{\varepsilon} \sum\limits_{l = 1}^p {\|\bar x(\bar I_l)\|_2^{2d}} = \sum\limits_{l = 1}^p \sum\limits_{j\in J_l}{\frac{x_{n+1}^{2r_j}\bar w_{j,l}}{\|\bar x(\bar I_l)\|_2^{2r_j}}}+\sum\limits_{l = 1}^p {\frac{\hat \psi_l}{\|\bar x(\bar I_l)\|_2^{2k}\hat D_l^k }}\,.\]
Letting $x_{n+1}:=1$ yields
\begin{equation*}
f+\varepsilon \sum\limits_{l = 1}^p {\theta_l^{d}}=\sum\limits_{l = 1}^p \sum\limits_{j\in J_l}{\frac{w_{j,l} }{\theta_l^{r_j}}}+\sum\limits_{l = 1}^p {\frac{\sigma_{0,l} }{\theta_l^{k}D_l^{k}}}\,,
\end{equation*}
 with $D_l=\hat D_l(x,1)\in \R[x( I_l)]$ and $\sigma_{0,l}:=\hat { \psi}_l (x,1)\in \Sigma[x( I_l)]_{d+k\omega_l}$. 
For $j\in J_l$, $l=1,\dots,p$, by setting $\sigma_{j,l}:=(\theta_l^{u_j}-\lambda_jg_j)^{2K^2}(\lambda_jg_j)^{2K}$, $\sigma_{j,l}\in \Sigma[x(I_l)]_{r_j-u_j}$ and $w_{j,l}=\sigma_{j,l}g_j$. 
By setting $\tilde k:=\max\{k,\,r_j:\,j\in J\}$,
\[ \begin{array}{rl}
f+\varepsilon \sum\limits_{l = 1}^p {\theta_l^{d}}&=\displaystyle\sum\limits_{l = 1}^p \sum\limits_{j\in J_l}{\frac{\theta_l^{\tilde k-r_j}D_l^{\tilde k}\sigma_{j,l}g_j }{\theta_l^{\tilde k}D_l^{\tilde k}}}+\sum\limits_{l = 1}^p {\frac{D_l^{\tilde k-k}\theta_l^{\tilde k-k}\sigma_{0,l} }{\theta_l^{\tilde k} D_l^{\tilde k}}}\\
&=\displaystyle\sum\limits_{l = 1}^p \sum\limits_{j\in J_l}{\frac{\tilde \sigma_{j,l}g_j }{\theta_l^{\tilde k}D_l^{\tilde k}}}+\sum\limits_{l = 1}^p {\frac{\tilde \sigma_{0,l} }{\theta_l^{\tilde k}D_l^{\tilde k}}}\,,
\end{array}\]
with $\tilde \sigma_{0,l}:={D_l^{\tilde k-k}\theta_l^{\tilde k-k}\sigma_{0,l} }\in \Sigma[x( I_l)]_{d+\tilde k\omega_l}$ and 
\[\tilde \sigma_{j,l}:=\theta_l^{\tilde k-r_j}D_l^{\tilde k}\sigma_{j,l}\in \Sigma[x(I_l)]_{\tilde k\omega_l-u_j}\subset \Sigma[x(I_l)]_{d+\tilde k\omega_l-u_j}\,,\, j\in J_l\,,\, l=1,\dots,p\,.\]
 Thus,
\begin{equation*}
f+\varepsilon \sum\limits_{l = 1}^p {\theta_l^{d}}=\sum\limits_{l = 1}^p {\frac{{{\tilde \sigma_{0,l} }}+ \sum_{j\in J_l}{{\tilde \sigma_{j,l} {g_j} }}}{\theta_l^{\tilde k}D_l^{\tilde k}} }\,.
\end{equation*}
Hence, the conclusion follows since $\Theta_l=\theta_lD_l$, $l=1,\dots,p$.
\end{proof}
\section{Conclusion}
In this paper, we have provided: 

- a sparse version for both Reznick's Positivstellensatz (resp. Putinar-Vasilescu's Positivstellensatz) for positive definite forms (resp. nonnegative polynomials).

- a sparse version of Putinar-Vasilescu's Positivstellensatz for polynomials that are nonnegative on a
possibly non-compact basic semialgebraic set.

All these certificates involve sums of squares of rational functions with uniform denominators and a topic of further research is how to exploit such positivity certificates  in polynomial optimization on non-compact basic semialgebraic sets.
\paragraph{\textbf{Acknowledgements}.} 
%
The first author was supported by the MESRI funding from EDMITT.
The second author was supported by the FMJH Program PGMO (EPICS project) and  EDF, Thales, Orange et Criteo, as well as from the Tremplin ERC Stg Grant ANR-18-ERC2-0004-01 (T-COPS project).
This work has benefited from the Tremplin ERC Stg Grant ANR-18-ERC2-0004-01 (T-COPS project), the European Union's Horizon 2020 research and innovation programme under the Marie Sklodowska-Curie Actions, grant agreement 813211 (POEMA) as well as from the AI Interdisciplinary Institute ANITI funding, through the French ``Investing for the Future PIA3'' program under the Grant agreement n$^{\circ}$ANR-19-PI3A-0004.
The third author was supported by the European Research Council (ERC) under the European's Union Horizon 2020 research and innovation program (grant agreement 666981 TAMING).
\appendix
\section{Appendix}
\label{sec:Appendix}
\tiny
\begin{align*}
\sigma_1=&
2(x_1 x_2^2 - 1/2 x_1 x_2 + \frac{9850453248969845}{54043195528445952} x_1 - \frac{5320372455790409}{18014398509481984} x_2)^2 \\
&+ \frac{23926543956308875}{13510798882111488}(x_1 x_2 - \frac{11060480396851749}{47853087912617750} x_2^2 - \frac{44192742279476107}{95706175825235500} x_1 \\
&+ \frac{644586703866657}{4785308791261775} x_2)^2 + \frac{562653141360639470744536704411533}{431022297783585770305704951808000} (x_2^2 \\
&- \frac{623163665459468362849659231125131}{1125306282721278941489073408823066} x_1 + \frac{368582633567496760334523605556710}{562653141360639470744536704411533} x_2)^2 \\
&+\frac{639450287086164028376019645640404206647864623032556298769898592967}{4108306132028813363573605000030024459546025044241423314110728110080} (x_1 \\
&- \frac{1267974608530305169381220379238785299625590449899169509120071486121}{639450287086164028376019645640404206647864623032556298769898592967} x_2)^2 \\
&+ \frac{543123373122508979151195069765687696047832930661622412978516470349}{25578011483446561135040785825616168265914584921302251950795943718680} x_2^2\,,
\end{align*}
and
\begin{align*}
\sigma_2=&
 142(x_2^2 x_3 + \frac{9}{142}x_2^2 - \frac{27}{142}x_2 x_3 + \frac{16330772824785797}{2558044588346441728}x_2 + \frac{16516005877459571}{1918533441259831296}x_3)^2 \\
&+ \frac{82370346211711367}{79938893385826304}(x_2^2 - \frac{249757745830951395}{1317925539387381872}x_2 x_3 - \frac{2229916595909069297}{2635851078774763744} x_2 \\
&- \frac{444311853617472399}{658962769693690936}x_3)^2 + \frac{9998861038823771954693138032308042163}{71224907617044875374639550136582144}(x_2 x_3 \\
&+ \frac{1371002666962875555152486340866448945}{19997722077647543909386276064616084326}x_2 \\
&- \frac{1915895198624799259160186828281383330}{9998861038823771954693138032308042163}x_3)^2 \\
&+ \frac{1314993284459107346080199880406085742469204627868504341150915245917907}{6489631845101369284182723987666581388983892796141390004625948809363456}(x_2 \\
&+ \frac{4839609578327155219592253159785080664809790873603364761086184116575700}{3944979853377322038240599641218257227407613883605513023452745737753721}x_3)^2 \\
&+ \frac{206106598111713320412255700324878936837424344667666864964748474281899081711403351953}{3331239337060166527019274109936287427427959404917141471434825520589496162184413052928}x_3^2\,.
\end{align*}
\footnotesize
\bibliographystyle{abbrv}

\end{document}